\def\hide#1{}
\def\old#1{}
\def\oop#1{}
\def\gap#1{}
\theoremstyle{plain}
\newtheorem{theorem}{Theorem}%[section]
\newtheorem{proposition}{Proposition}%[section]
\newtheorem*{theorem*}{Theorem}
\newtheorem{lemma}{Lemma}%[section]
\newtheorem{remark}{Remark}
\newtheorem{corollary}{Corollary}
\newtheorem{definition}{Definition}
\def \ZZ  {\Bbb Z}
\begin{document}

\newcounter{figcounter}
\setcounter{figcounter}{0} \addtocounter{figcounter}{1}

\title{The enumeration of coverings of closed orientable Euclidean manifolds $\mathcal{G}_{3}$ and $\mathcal{G}_{5}$.}
\author{
G.~Chelnokov\thanks{This work was supported by the Russian Foundation for Basic Research (grant $18-01-00036\slash18$).}\\
{\small\em National Research University Higher School of Economics} \\
{\small\tt grishabenruven@yandex.ru }
\\ [2ex]
A.~Mednykh\thanks{This work was supported by the Russian Foundation for Basic Research (grant 16-31-00138).}\\
{\small\em Sobolev Institute of Mathematics,} \\
{ \small\em Novosibirsk, Russia}\\
{\small\em Novosibirsk State University,}\\
{\small\em Novosibirsk, Russia}\\
{\small\tt mednykh@math.nsc.ru} }
\date{}
\maketitle \maketitle

\begin{abstract}
There are only 10 Euclidean forms, that is  flat closed three
dimensional manifolds: six are orientable and four are
non-orientable. The aim of this paper is to describe all types of
$n$-fold coverings over orientable Euclidean manifolds
$\mathcal{G}_{3}$ and $\mathcal{G}_{5}$, and calculate the numbers
of non-equivalent coverings of each type. We classify subgroups in
the fundamental groups $\pi_1(\mathcal{G}_{3})$ and
$\pi_1(\mathcal{G}_{5})$ up to isomorphism and calculate the numbers
of conjugated classes of each type of subgroups for index $n$. The
manifolds $\mathcal{G}_{3}$ and $\mathcal{G}_{5}$ are uniquely
determined among the others orientable forms by their homology
groups $H_1(\mathcal{G}_{3})=\ZZ_3\times  \ZZ$ and
$H_1(\mathcal{G}_{5})= \ZZ$.

Key words: Euclidean form, platycosm, flat 3-manifold,
non-equivalent coverings, crystallographic group.

{\bf 2010 Mathematics Subject Classification:}  20H15,  57M10,
55R10.
\end{abstract}

\section*{Introduction}
Let  $\mathcal{M}$  be a manifold with fundamental group
$\Gamma=\pi_{1}(\mathcal{M}).$ Two coverings $$p_1: \mathcal{M}_1
\to \mathcal{M}   \text {      and       } p_2: \mathcal{M}_2 \to
\mathcal{M}  $$ are said to be equivalent if there exists a
homeomorphism $h:  \mathcal{M}_1 \to \mathcal{M}_2$ such that $p_1
=p_2 \circ h.$ According to the general theory of covering spaces,
any $n$-fold covering is uniquely determined by a subgroup of index
$n$ in the group $\Gamma.$ The equivalence classes of $n$-fold
covering of $\mathcal{M}$ are in one-to-one correspondence with the
conjugacy
 classes of subgroups of index $n$ in the fundamental group
 $\pi_1(\mathcal{M}).$ See,
for example, (\cite{Hatcher}, p.~67).
  In such a way the following two natural problems arise. The first one is to calculate the number
 of subgroups of given finite index $n$ in $\pi_1(\mathcal{M}).$  The second problem is to find the number of conjugacy
 classes of subgroups of index $n$ in $\pi_1(\mathcal{M}).$

The problem     of enumeration for nonequivalent coverings over a
Riemann surface with given branch type goes back to the paper
\cite{Hur91} by Hurwitz, in which the number of coverings over the
Riemann sphere with given number of simple (of order two) branching
points was determined. Later, in \cite{Hur02}, it has been proved
that this number can be expressed  in the terms of irreducible
characters of symmetric groups. The Hurwitz problem was studed by
many authors. A detailed survey of the related results is contained
in  (\cite{Lub}, \cite{KwakLeeMed}). For closed Riemann surfaces,
this problem was completely solved in \cite{MedHurwitz}. However, of
most interest is the case of unramified coverings. Let $s_\Gamma(n)$
denote the number of subgroups of index $n$ in the group $\Gamma,$
and let $c_\Gamma(n)$ be the number of conjugacy classes of such
subgroups. According to what was said above, $c_\Gamma(n)$
coincides with the number of nonequivalent $n$-fold coverings over a
manifold $\mathcal{M}$  with fundamental group $\Gamma.$
%If
%$\mathcal{M}$  is a compact surface with nonempty boundary of Euler
%characteristic $\chi(\mathcal{M}) = 1 - r,$ where $r \ge 0 $ (e.g.,
%a disk with $r$ holes), then its fundamental group $\Gamma = F_r$ is
%the free  group of rank $r.$ For this case, M.~Hall \cite{Ha49}
%calculated the number $s_\Gamma(n)$ and V.~A.~Liskovets \cite{Li71}
%found the number $c_\Gamma(n).$  A different approach for
%enumeration of the conjugacy classes of subgroups in the free group
%was given by J.~ H.~ Kwak and Y.~ Lee \cite{KwakLee96}.
The numbers $s_\Gamma(n)$ and $c_\Gamma(n)$  for the fundamental
group of a closed surface (orientable or not) were found  in
(\cite{Med78}, \cite{Med79}, \cite{MP86}). In the paper \cite{Medn},
a general method for calculating the number $c_\Gamma(n)$  of
conjugacy classes of subgroups in an arbitrary finitely generated
group $\Gamma$  was given.  Asymptotic formulas for $s_\Gamma(n)$ in
many important cases were obtained by T.~W.~M\"uller and his
collaborators (\cite{Mul},  \cite{MulShar}, \cite{MulPuch}).

In the three-dimensional case, for a large class of Seifert
fibrations, the value of $s_\Gamma(n)$ was calculated  in
\cite{LisMed00} and \cite{LisMed12}. In our previous papers
\cite{We1} and \cite{We2} the numbers $s_\Gamma(n)$ and
$c_\Gamma(n)$  were determined for the fundamental group of
non-orientable Euclidian manifolds   $\mathcal{B}_1$ and
$\mathcal{B}_2$ whose   homologies  are ${H}_1(\mathcal{B}_1)=
\mathbb{Z}_2\oplus\mathbb{Z}^2$ and ${H}_1(\mathcal{B}_2)=
\mathbb{Z}^2$ and for the orientable Euclidean manifolds
$\mathcal{G}_2$ and $\mathcal{G}_4$ with   ${H}_1(\mathcal{G}_2)=
\mathbb{Z}_2\oplus\mathbb{Z}_2\oplus\mathbb{Z}$ and
${H}_1(\mathcal{G}_4)= \mathbb{Z}_2\oplus\mathbb{Z}$ respectively.

 The aim of the present paper is to investigate
$n$-fold coverings over orientable Euclidean three dimensional
manifolds $\mathcal{G}_{3}$ and $\mathcal{G}_{5}$, with the first
homologies
  $H_1(\mathcal{G}_{3})=\ZZ_3\oplus \ZZ$ and
$H_1(\mathcal{G}_{5})=\ZZ$.  We classify subgroups of finite index
in the fundamental groups of $\pi_1(\mathcal{G}_{3})$ and
$\pi_1(\mathcal{G}_{5})$ up to isomorphism. Then we calculate the
number of subgroups and the number of conjugacy classes of subgroups
of each isomorphism type for a given index $n$.

 We note that numerical methods to solve these and similar problems for the three-dimensional crystallogical  groups were
developed by the Bilbao group \cite{babaika}. The first homologies
of all the three-dimensional crystallogical  groups are determined
in \cite{Ratc}.

\subsection*{Notations}
We  use the following notations: $s_{H,G}(n)$ is the number of
subgroups of index $n$ in the group $G$, isomorphic to the group
$H$; $c_{H,G}(n)$ is the number conjugacy classes of subgroups of
index $n$ in the group $G$, isomorphic to the group $H$. %{\bf Сюда
%добавить определения комутанта и промежуточного класса
%сопряженности}.

Also we will need the following combinatorial functions:
$$\sigma_0(n) = \sum_{k \mid n}1,\quad \sigma_1(n) =  \sum_{k \mid n} k,\quad \sigma_2(n) =  \sum_{k \mid n} \sigma_1(k),\quad \omega(n) = \sum_{k \mid n} k\sigma_1(k),$$
%$$\sigma_1(n) =  \sum_{k \mid n} k,$$
%$$\sigma_2(n) =  \sum_{k \mid n} \sigma_1(k),$$
%$$\omega(n) = \sum_{k \mid n} k\sigma_1(k),$$
$$\theta(n)=|\{(p,q)|p>0, q\ge 0, p^2-pq+q^2=n\}|.$$
%$$
%\tau(n)=|\{(s,t)|s,t \in \ZZ, s>0, t\ge 0, s^2+t^2=n\}|.
%$$
In all cases we consider the function vanished if $n\not\in
\Bbb{N}$.

%Denote $\theta(n)=|\{(p,q)|p>0, q\ge 0, p^2+q^2-pq=n\}|$.

{\bf Remark.} It can be shown that $\theta(n)=\sum_{k \mid
n}\big(\frac{k}{3}\big)$, where $\big(\frac{k}{3}\big)$ is the
Legendre symbol, see \cite{Conw-Sloan} p.112. This representation
clarifies the analogy between the functions $\sigma_1(n)$ and
$\theta(n)$, and makes the appearance of the latter one less
amazing. However, this representation will not be used further.

\section{Overview}
The main goal of this paper is to prove the following results.

The first theorem provides the complete solution of the problem of
enumeration of subgroups of a given finite index in
$\pi_{1}(\mathcal{G}_{3})$.
\begin{theorem}\label{th-1-tricosm}
Every subgroup $\Delta$ of finite index $n$ in
$\pi_{1}(\mathcal{G}_{3})$ is isomorphic to either
$\pi_{1}(\mathcal{G}_{3})$ or $\pi_1(\mathcal{G}_{1})\cong\ZZ^3$.
The respective numbers of subgroups are
$$
s_{\pi_1(\mathcal{G}_{3}), \pi_1(\mathcal{G}_{3})}(n) =  \sum_{k
\mid n}k\theta(k)-\sum_{k \mid \frac{n}{3}}k\theta(k), \leqno (i)
$$
$$
s_{\ZZ^3, \pi_1(\mathcal{G}_{3})}(n) =  \omega(\frac{n}{3}).\leqno
(ii)
$$
\end{theorem}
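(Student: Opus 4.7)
The plan is to exploit the Bieberbach extension $1 \to K \to \pi_1(\mathcal{G}_3) \to \ZZ_3 \to 1$ with $K \cong \ZZ^3$ the translation subgroup, together with the $t$-equivariant decomposition $K = \ZZ z \oplus \Lambda$, where $z = t^3$ spans the rotation axis and $\Lambda \cong \ZZ[\omega]$ is the Eisenstein lattice on which conjugation by $t$ acts as multiplication by a primitive cube root of unity $\omega$. Every finite-index $\Delta$ is torsion-free, so by Bieberbach's theorems it is the fundamental group of a closed orientable flat $3$-manifold whose holonomy equals the image of $\Delta$ in $\ZZ_3$. If that image is trivial, then $\Delta \subseteq K$ and $\Delta \cong \ZZ^3$; otherwise the image is all of $\ZZ_3$ and the classification of orientable flat $3$-manifolds forces $\Delta \cong \pi_1(\mathcal{G}_3)$. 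This yields the required dichotomy of isomorphism types.

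For (ii), $\Delta \cong \ZZ^3$ is simply a sublattice of $K \cong \ZZ^3$ of index $n/3$ (vacuous unless $3 \mid n$). A Hermite-normal-form enumeration gives $\sum_{c \mid n/3} c^{2}\sigma_{1}(n/(3c)) = \omega(n/3)$ by a short divisor-sum manipulation, establishing (ii).

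For (i), set $L := \Delta \cap K$, a $t$-invariant sublattice of $K$ of index $n$. I would parametrize such $L$ via its $z$-projection as
$$L = \ZZ(az + x) \oplus I, \qquad a>0,\ I \triangleleft \ZZ[\omega],\ x \in \Lambda/I,\ ab=n,\ b=N(I),$$
with $t$-invariance equivalent to $(\omega-1)x \in I$. Two lifts $tk_1, tk_2$ of a generator of $\ZZ_3$ yield the same $\Delta$ iff $k_1 \equiv k_2 \pmod{L}$, and the subgroup condition $(tk)^3 \in L$ evaluates via
$$(tk)^3 \;=\; t^3\,(1+\sigma+\sigma^2)(k) \;=\; (1+3k_z)\,z$$
(using $1+\omega+\omega^2=0$ on $\Lambda$) to the single requirement $(1+3k_z)z \in L$. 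This is solvable in $k_z$ iff $3 \nmid a$, in which case $k_z$ is unique modulo $a$ and yields $m := (1+3k_z)/a$ coprime to $3$ by the Bezout identity $ma - 3k_z = 1$; it further imposes $mx \in I$, which forces $x \equiv 0 \pmod{I}$, since $(I:(1-\omega))/I$ is either trivial or $\mathbb{F}_3$ while multiplication by $m$ is invertible on $\mathbb{F}_3$. Hence the admissible $L$ are exactly $L = a\ZZ \oplus I$ with $3 \nmid a$ and $N(I)=b$, and for each such $L$ the coordinate $k_\omega \in \Lambda/I$ is free, giving $b$ distinct subgroups $\Delta$ per pair $(a,I)$. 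Summing,
$$s_{\pi_1(\mathcal{G}_3),\pi_1(\mathcal{G}_3)}(n) \;=\; \sum_{\substack{ab=n\\ 3\nmid a}} b\,\theta(b) \;=\; \sum_{b\mid n} b\,\theta(b) - \sum_{b\mid n/3} b\,\theta(b),$$
the second equality by separating divisors according to whether $3 \mid n/b$.

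The main obstacle is the reduction that collapses the twist $x$ to $0$: the invariance constraint $(\omega-1)x \in I$ alone leaves up to three choices when the ramified prime $\pi = 1-\omega$ divides $I$, and only in combination with the closure constraint $mx \in I$ (together with $\gcd(m,3)=1$) is $x$ pinned down uniformly across both regimes $3 \mid b$ and $3 \nmid b$. Isolating this coincidence turns the apparent double sum over invariant lattices and their twists into the single $\theta$-weighted divisor sum, whereupon the claimed identity drops out by one application of inclusion--exclusion.
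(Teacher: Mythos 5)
Your proposal is correct and arrives at the same divisor sums, but by a genuinely different route than the paper. For the dichotomy of isomorphism types you invoke Bieberbach's theorems plus the classification of orientable flat $3$-manifolds (only $\mathcal{G}_3$ has holonomy $\ZZ_3$), whereas the paper avoids the classification altogether: it attaches to $\Delta$ the invariants $a(\Delta)$, $H(\Delta)=\Delta\cap\langle x,y\rangle$, $\nu(\Delta)$, establishes a bijection with $n$-essential triples (\Cref{propG3-4}), and proves $\Delta\cong\pi_1(\mathcal{G}_3)$ by exhibiting generators $X,Y,Z$ satisfying the defining relations and excluding improper relations via the projection $\phi_{\mathcal{G}3}$ and a determinant argument. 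For the count, the paper works with the rank-two intersection $H=\Delta\cap\Gamma$ together with the minimal positive $z$-power element, so the coset $\nu\in\Gamma/H$ is free by construction and (i) is immediately $\sum_{a\mid n,\,3\nmid a}\frac{n}{a}\theta(\frac{n}{a})$; you instead take the rank-three lattice $L=\Delta\cap K$ in the translation subgroup, identify $\Gamma$ with the Eisenstein integers, and must do the extra work of showing that the twist $x$ vanishes (your quotient $J/I$ is trivial or of order $3$ and is killed by an $m$ prime to $3$) and that $k_z$ is determined modulo $a$ while $k_\omega$ is free, after which the same sum appears; your identification of the number of ideals of norm $b$ with $\theta(b)$ is precisely \Cref{lemG3-3.1}, and your Hermite-normal-form identity $\sum_{c\mid m}c^{2}\sigma_1(m/c)=\omega(m)$ plays the role of \Cref{number of sublattices} in part (ii). What your route buys is conceptual clarity — the ideal-theoretic language and the closure condition $(tk)^3=(1+3k_z)z\in L$ explain transparently why $3\nmid a$ and why the apparent extra twists disappear; what it costs is reliance on the classification of flat $3$-manifolds, which the paper's hands-on presentation check renders unnecessary.
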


The next theorem provides the number of conjugacy classes of
subgroups of index $n$ in $\pi_{1}(\mathcal{G}_{3})$ for each
isomorphism type. That is the number of non-equivalent $n$-fold
covering $\mathcal{G}_{3}$, which have a prescribe fundamental
group.

\begin{theorem}\label{th-2-tricosm}
Let $\mathcal{N} \to \mathcal{G}_{3}$ be an $n$-fold covering over
$\mathcal{G}_{3}$. If $n$ is not divisible by 3 then $\mathcal{N}$
is homeomorphic to $\mathcal{G}_{3}$. If $n$ is divisible by 3 then
$\mathcal{N}$ is homeomorphic to either $\mathcal{G}_{3}$ or
$\mathcal{G}_{1}$. The corresponding numbers of nonequivalent
coverings are given by the following formulas:
$$
c_{\pi_{1}(\mathcal{G}_{3}),\pi_{1}(\mathcal{G}_{3})}(n)
=\sum_{k\mid n} \theta(k)+\sum_{k\mid \frac{n}{3}}
\theta(k)-2\sum_{k\mid \frac{n}{9}} \theta(k) \leqno (i)
$$
$$
c_{\ZZ^3,\pi_{1}(\mathcal{G}_{3})}(n)
=\frac{1}{3}\Big(\omega(\frac{n}{3})+2\sum_{k\mid
\frac{n}{3}}\theta(k)+4\sum_{k\mid \frac{n}{9}}\theta(k)\Big).
\leqno (ii)
$$
%The respective sums are equal zero in case $\frac{n}{3}$ or
%$\frac{n}{9}$ is not integer.
\end{theorem}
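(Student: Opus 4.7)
The plan is to reduce the problem to a Burnside-type count leveraging the total enumeration already supplied by Theorem~\ref{th-1-tricosm}. The ambient group $\Gamma = \pi_{1}(\mathcal{G}_{3})$ is a Bieberbach extension $1 \to \ZZ^3 \to \Gamma \to \ZZ/3 \to 1$ with abelian kernel, so the conjugation action on subgroups factors through the quotient $\ZZ/3 = \langle t\rangle$. For any subgroup $\Delta \le \Gamma$ the normalizer contains $\ZZ^3$: if $\Delta \subseteq \ZZ^3$ this is automatic by abelianness, while if $\Delta$ surjects onto $\ZZ/3$ then $\Delta \cdot \ZZ^3 = \Gamma$ and $\Delta$ normalizes itself, so again $\ZZ^3 \subseteq N_\Gamma(\Delta)$. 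Consequently every conjugacy class has size $1$ or $3$, and Burnside's lemma gives
\begin{equation*}
c_{H,\Gamma}(n) \;=\; \frac{1}{3}\bigl(s_{H,\Gamma}(n) + 2\,F_H(n)\bigr),
\end{equation*}
where $F_H(n)$ denotes the number of index-$n$ subgroups of isomorphism type $H$ that are fixed (i.e.\ normalized) by $t$.

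For $H = \ZZ^3$, Theorem~\ref{th-1-tricosm}(ii) gives $s_{H,\Gamma}(n) = \omega(n/3)$, and the task reduces to counting sublattices $\Delta \subseteq \ZZ^3$ of index $n$ that are preserved by the order-$3$ linear part of $t$. The $t$-action decomposes $\ZZ^3 \otimes \mathbb{Q}$ into a fixed line and a $2$-plane on which $t$ acts by a primitive rotation, equipping the transverse rank-$2$ lattice with the structure of a module over the Eisenstein integers $\ZZ[\zeta_3]$. A $t$-invariant sublattice is then described by an Eisenstein sub-ideal (enumerated by $\theta$) combined with a sublattice of the axis, subject to a coupling constraint that splits into two regimes depending on whether the translation part of $t$ is absorbed into $\Delta$ at a divisor of $n/3$ or of $n/9$. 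I expect this to produce $F_{\ZZ^3}(n) = \sum_{k \mid n/3} \theta(k) + 2\sum_{k \mid n/9} \theta(k)$, and the Burnside formula above will then recover statement~(ii).

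For $H = \pi_{1}(\mathcal{G}_{3})$, such a subgroup is determined by the pair $(\Lambda, \bar\tau)$, where $\Lambda = \Delta \cap \ZZ^3$ and $\bar\tau \in \Gamma/\Lambda$ is the coset of a lift of a generator of $\Gamma/\ZZ^3$. The action of $t$ fixes $\Delta$ if and only if it fixes both $\Lambda$ (as before) and the coset $\bar\tau$. Enumerating such pairs via the same Eisenstein/axis decomposition, modulo the ambiguity of replacing $\tau$ by an element of $\tau\Lambda$, will give $F_{\pi_{1}(\mathcal{G}_{3})}(n)$ in closed form. Substituting this together with $s_{\pi_{1}(\mathcal{G}_{3}),\Gamma}(n) = \sum_{k\mid n} k\theta(k) - \sum_{k\mid n/3} k\theta(k)$ from Theorem~\ref{th-1-tricosm}(i) into the Burnside formula should yield statement~(i); the mixture of $k\theta(k)$ and pure $\theta(k)$ terms in (i) will emerge from the difference between ``the subgroup $\Delta$ is fixed as a whole'' and ``only the underlying sublattice $\Lambda$ is fixed''.

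The main obstacle is the bookkeeping that produces the clean split between summations over divisors of $n$, $n/3$, and $n/9$. This amounts to identifying precisely which $t$-invariant sublattices can accommodate a $t$-fixed coset of a lift of $t$ and which cannot; equivalently, one must track how the $2$-cocycle defining the nontrivial extension $\Gamma = \ZZ^3 \rtimes \ZZ/3$ restricts to each candidate sublattice. Once this invariant enumeration is in hand for both isomorphism types, the remainder of the argument is routine Burnside aggregation.
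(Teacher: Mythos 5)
Your reduction fails at its first step for subgroups of type $\pi_{1}(\mathcal{G}_{3})$. The claim that $\ZZ^3\subseteq N_{\pi_{1}(\mathcal{G}_{3})}(\Delta)$ whenever $\Delta$ surjects onto $\ZZ/3$ is a non sequitur: from $\Delta\cdot\ZZ^3=\pi_{1}(\mathcal{G}_{3})$ and $\Delta\subseteq N(\Delta)$ nothing follows about the translations normalizing $\Delta$, and in fact they do not. If $\Delta$ corresponds to the triple $(a,H,\nu)$ of \Cref{defG3-all}, conjugation by $g\in\Gamma=\langle x,y\rangle$ leaves $a$ and $H$ unchanged but shifts the coset $\nu$ by $g-\ell^{\pm1}(g)$ (additive notation in $\Gamma\cong\ZZ^2$), and these shifts generate $\langle xy^{-1},xy^{2}\rangle$, of index $3$ in $\Gamma$. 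Hence for a fixed pair $(a,H)$ the $n/a$ subgroups obtained by varying $\nu$ fall into only $|\Gamma:\langle[\Gamma,Z(\Delta)],H\rangle|\in\{1,3\}$ conjugacy classes, i.e.\ classes of size $n/a$ or $n/(3a)$ occur, not just $1$ or $3$. Concretely, for $n=7$, $a=1$ and $H$ an $\ell$-invariant sublattice of index $7$, all seven subgroups sharing $(1,H)$ are conjugate; numerically $s_{\pi_{1}(\mathcal{G}_{3}),\pi_{1}(\mathcal{G}_{3})}(7)=1+7\theta(7)$ while $c_{\pi_{1}(\mathcal{G}_{3}),\pi_{1}(\mathcal{G}_{3})}(7)=1+\theta(7)$, so your formula $c=\tfrac13\bigl(s+2F\bigr)$ would force $F=1-2\theta(7)<0$. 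So the conjugation action does not factor through the holonomy $\ZZ/3$ in case (i), Burnside over $\ZZ/3$ is inapplicable there, and the mechanism that actually produces formula (i) --- conjugation by the translations themselves collapsing the $\nu$-coordinate modulo $\langle[\Gamma,Z(\Delta)],H\rangle$ --- is absent from your argument. The paper's treatment of case (i) does exactly this: it takes $\Lambda=\pi_{1}(\mathcal{G}_{3})$, computes $[\Lambda,Z(\Delta)]=\langle xy^{-1},xy^{2}\rangle$, and counts $1$ or $3$ classes per pair $(a,H)$, with no Burnside step at all.

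For subgroups isomorphic to $\ZZ^3$ your plan is sound and coincides with the paper's: such $\Delta$ lie in $\langle x,y,z^3\rangle$, which normalizes them since it is abelian, the order-$3$ quotient acts on them, and Burnside gives $c=\tfrac13\bigl(\omega(\tfrac{n}{3})+2F\bigr)$. But the decisive count $F_{\ZZ^3}(n)=\sum_{k\mid n/3}\theta(k)+2\sum_{k\mid n/9}\theta(k)$ is only announced (``I expect this to produce''), not derived; this is precisely the content of \Cref{number of thirds}: invariance of $\Delta$ under conjugation by $z$ forces $\ell(H)=H$ and $\ell(\nu)=\nu$, and the number of $\ell$-fixed cosets is $1$ or $3$ according as $3\nmid|\Gamma:H|$ or $3\mid|\Gamma:H|$. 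So case (ii) needs that fixed-point lemma actually supplied, and case (i) needs a different argument altogether.
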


The next two  theorems are analogues of \Cref{th-1-tricosm} and
\Cref{th-2-tricosm} respectively for the manifold $\mathcal{G}_5$.

\begin{theorem}\label{th-1-hexacosm}Every subgroup $\Delta$ of finite index $n$ in
$\pi_{1}(\mathcal{G}_{5})$ is isomorphic to either
$\pi_{1}(\mathcal{G}_{5})$ or $\pi_{1}(\mathcal{G}_{3})$ or
$\pi_{1}(\mathcal{G}_{2})$ or $\pi_1(\mathcal{G}_{1})\cong\ZZ^3$.
The respective numbers of subgroups are
$$
s_{\pi_1(\mathcal{G}_{5}), \pi_1(\mathcal{G}_{5})}(n) =  \sum_{k
\mid n,\, (\frac{n}{k},6)=1}k\theta(k) \leqno (i)
$$
%$$
%s_{\pi_1(\mathcal{G}_{5}), \pi_1(\mathcal{G}_{5})}(n) =  \sum_{k
%\mid n}k\theta(k)-\sum_{k \mid \frac{n}{2}}k\theta(k)-\sum_{k \mid
%\frac{n}{3}}k\theta(k)+\sum_{k \mid \frac{n}{6}}k\theta(k) \leqno
%(i)
%$$
$$
s_{\pi_1(\mathcal{G}_{3}), \pi_1(\mathcal{G}_{5})}(n)=\sum_{k \mid
\frac{n}{2}}k\theta(k)-\sum_{k \mid \frac{n}{6}}k\theta(k) \leqno
(ii)
$$
$$
s_{\pi_1(\mathcal{G}_{2}),
\pi_1(\mathcal{G}_{5})}(n)=\omega(\frac{n}{3})-\omega(\frac{n}{6})
\leqno (iii)
$$
$$
s_{\ZZ^3, \pi_1(\mathcal{G}_{5})}(n)=\omega(\frac{n}{6}). \leqno
(iv)
$$

\end{theorem}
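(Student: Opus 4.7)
The plan is to classify each finite-index subgroup $\Delta \leq G := \pi_1(\mathcal{G}_5)$ by its image under the holonomy projection $\pi : G \to G/T \cong \ZZ_6$, where $T \cong \ZZ^3$ is the translation subgroup of $G$. Fix a presentation in which $T = \Lambda \oplus \ZZ$, with $\Lambda$ the hexagonal lattice (the Eisenstein integers), a generator $x \in G$ of the holonomy acting on $\Lambda$ as a rotation of order $6$ and trivially on the axial $\ZZ$-summand, and $v := x^6$ generating the axial translations. The four subgroups $\{1\}, \ZZ_2, \ZZ_3, \ZZ_6$ of $\ZZ_6$ correspond, by Bieberbach rigidity, to the four isomorphism types of $\Delta$ listed in the theorem; writing $L := \Delta \cap T$ and $H = \pi(\Delta)$, one has $[T:L] = n\,|H|/6$.

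Parts (ii), (iii), (iv) reduce to previously established enumerations. If $\Delta \cong \ZZ^3$ (image trivial) then $\Delta$ is a sublattice of $T$ of index $n/6$; the count of sublattices of index $m$ in $\ZZ^3$ is $\omega(m)$, giving (iv). If $\Delta \cong \pi_1(\mathcal{G}_3)$ (resp.\ $\pi_1(\mathcal{G}_2)$), then $\Delta$ must lie in the unique subgroup $\pi^{-1}(\ZZ_3)$ (resp.\ $\pi^{-1}(\ZZ_2)$) of $G$ of index $2$ (resp.\ $3$), and this subgroup is itself isomorphic to $\pi_1(\mathcal{G}_3)$ (resp.\ $\pi_1(\mathcal{G}_2)$) as an abstract group. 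Consequently
\[
s_{\pi_1(\mathcal{G}_3),\, G}(n) \,=\, s_{\pi_1(\mathcal{G}_3),\, \pi_1(\mathcal{G}_3)}(n/2), \qquad s_{\pi_1(\mathcal{G}_2),\, G}(n) \,=\, s_{\pi_1(\mathcal{G}_2),\, \pi_1(\mathcal{G}_2)}(n/3),
\]
and \Cref{th-1-tricosm}(i), together with the analogous count for $\pi_1(\mathcal{G}_2)$ from \cite{We2}, supplies the right-hand sides as the formulas stated in (ii) and (iii).

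The core of the argument is (i). Here $L$ must be a $\ZZ_6$-invariant sublattice of $T$ of index $n$. Any such $L$ splits as $L = I \oplus m\ZZ$, with $I$ an ideal of $\Lambda$ and $k\cdot m = n$ where $k = [\Lambda : I]$; the reason is that the $\ZZ_6$-fixed subgroup of $\Lambda/I$ is trivial for every ideal $I$ of $\Lambda$, which forces the splitting via a standard short-exact-sequence argument. Since the number of ideals of $\Lambda$ of norm $k$ equals $\theta(k)$, there are $\sum_{k\mid n}\theta(k)$ admissible $L$. For each such $L$, the subgroups $\Delta$ with $\Delta \cap T = L$ and image $\ZZ_6$ are in bijection with cosets $tL \in T/L$ satisfying $(xt)^6 \in L$. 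A direct cocycle computation yields
\[
(xt)^6 \,=\, v + (1 + \rho + \rho^2 + \rho^3 + \rho^4 + \rho^5)(t),
\]
where $\rho$ denotes the $\ZZ_6$-action on $T$. Since $1 + \rho + \cdots + \rho^5$ annihilates $\Lambda$ and acts as multiplication by $6$ on the axial summand, the condition reduces to the congruence $6\,t_z \equiv -v \pmod m$ on the axial component, with the $\Lambda$-component $t_{xy}$ unconstrained in $\Lambda/I$. This congruence has a unique solution iff $\gcd(6, m) = 1$, in which case it contributes $k$ lifts per admissible $L$. Summing produces $\sum_{km = n,\,(m,6)=1} k\theta(k) = \sum_{k\mid n,\,(n/k,6)=1} k\theta(k)$, which is (i).

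The main obstacle will be the direct enumeration in case (i): one must identify the $\ZZ_6$-invariant sublattices of $T$ through the ideal structure of the Eisenstein integers, justify the splitting $L = I \oplus m\ZZ$, execute the cocycle computation for $(xt)^6$, and verify that $\gcd(m, 6) = 1$ is precisely the condition under which a lift of $x$ to $G/L$ exists. Once these ingredients are in place, parts (ii), (iii), (iv) follow by reduction to the counts from \Cref{th-1-tricosm} and \cite{We2}.
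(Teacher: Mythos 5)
Your argument is correct, but it takes a genuinely different route from the paper's. The paper works entirely inside $\pi_{1}(\mathcal{G}_{5})$ with canonical forms: a subgroup is encoded by the 3-plet $(a(\Delta),H(\Delta),\nu(\Delta))$ of \Cref{propG5-4}, the isomorphism type is read off from $(a,6)$ and verified by exhibiting generators that satisfy the defining relations, and all four counts come from one summation over admissible 3-plets. You instead classify $\Delta$ by its holonomy image in $\ZZ_6$ and invoke Bieberbach theory (finite-index subgroups are Bieberbach, the holonomy is an isomorphism invariant, orientability excludes $\mathcal{B}_1,\mathcal{B}_2$) plus the classification of flat $3$-manifolds to pin down the isomorphism types; you should cite \cite{Wolf} for these facts, since the paper proves the corresponding statements by hand. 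Your reduction then gives parts (ii)--(iv) almost for free: every $\Delta$ of type $\mathcal{G}_3$, $\mathcal{G}_2$, $\ZZ^3$ lies in $\langle\tilde{x},\tilde{y},\tilde{z}^2\rangle\cong\pi_1(\mathcal{G}_3)$, $\langle\tilde{x},\tilde{y},\tilde{z}^3\rangle\cong\pi_1(\mathcal{G}_2)$, $T=\langle\tilde{x},\tilde{y},\tilde{z}^6\rangle\cong\ZZ^3$ of index $2$, $3$, $6$ respectively (do record the one-line verification that $\tilde{z}^2$ acts on $\Gamma$ as the order-3 rotation and $\tilde{z}^3$ as $-I$), so the counts follow from \Cref{th-1-tricosm}(i), from the formula $s_{\pi_1(\mathcal{G}_2),\pi_1(\mathcal{G}_2)}(m)=\omega(m)-\omega(\frac{m}{2})$ quoted from \cite{We2}, and from \Cref{number of sublattices}, reproducing the stated expressions. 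For (i) your count is in substance the paper's count in different clothes: your data $(m,I,t_{xy})$ matches the paper's $(a,H,\nu)$, but where the paper obtains the restriction $(a,6)=1$ from the isomorphism-type analysis and leaves $\nu$ unconstrained, you derive $(m,6)=1$ as the solvability condition of the congruence $1+6t_z\equiv 0 \pmod m$ coming from $(xt)^6\in L$ (the displayed condition should read $6t_z\equiv-1\pmod m$, not $\equiv -v$). Your splitting $L=I\oplus m\ZZ$ is sound because $1-\zeta_6$ has norm $1$, so the rotation has no nonzero fixed vectors in $\Lambda/I$; and the ideal count you use is exactly \Cref{lemG3-3.1}, since for a subgroup of $\Lambda$ invariance under the order-6 rotation is equivalent to invariance under its square. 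The trade-off is that the paper's self-contained 3-plet machinery is reused verbatim for the conjugacy-class counts of \Cref{th-2-hexacosm}, which your shortcut through abstract classification and characteristic subgroups does not by itself set up.
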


\begin{theorem}\label{th-2-hexacosm}
The numbers of $n$-fold covering over $\mathcal{G}_{5}$  is given by
the following formulas:
$$
c_{\pi_{1}(\mathcal{G}_{5}),\pi_{1}(\mathcal{G}_{5})}(n) =
 \sum_{k\mid n,\, (\frac{n}{k},6)=1}\theta(k) \leqno (i)
$$
%$$
%c_{\pi_{1}(\mathcal{G}_{5}),\pi_{1}(\mathcal{G}_{5})}(n) =
%\sum_{k\mid n} \theta(k) - \sum_{k\mid \frac{n}{2}} \theta(k) -
%\sum_{k\mid \frac{n}{3}} \theta(k) + \sum_{k\mid \frac{n}{6}}
%\theta(k) \leqno (i)
%$$
$$
c_{\pi_{1}(\mathcal{G}_{3}),\pi_{1}(\mathcal{G}_{5})}(n) =
\sum_{k\mid\frac{n}{2}}\theta(k)-\sum_{k\mid\frac{n}{18}}\theta(k)
\leqno (ii)
$$
$$
c_{\pi_{1}(\mathcal{G}_{2}),\pi_{1}(\mathcal{G}_{5})}(n)=\frac{1}{3}\Big(\sigma_2(\frac{n}{3})+2\sigma_2(\frac{n}{6})-3\sigma_2(\frac{n}{12})+2\sum_{k\mid
\frac{n}{3}}\theta(k)-2\sum_{k\mid \frac{n}{6}}\theta(k)\Big) \leqno
(iii)
$$
$$
c_{\ZZ^3,\pi_{1}(\mathcal{G}_{5})}(n)=\frac{1}{6}\Big(\omega(\frac{n}{6})+\sigma_2(\frac{n}{6})+3\sigma_2(\frac{n}{12})
+4\sum_{k\mid\frac{n}{6}}\theta(k)+4\sum_{k\mid\frac{n}{18}}\theta(k)\Big)\leqno
(iv).
$$
\end{theorem}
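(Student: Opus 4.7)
The plan is to derive Theorem \ref{th-2-hexacosm} from Theorem \ref{th-1-hexacosm} by analyzing the conjugation action of $\Gamma = \pi_{1}(\mathcal{G}_{5})$ on its index-$n$ subgroups, organised by isomorphism type. Using the extension $1 \to T \to \Gamma \to P \to 1$ with $T \cong \ZZ^{3}$ and point group $P \cong \ZZ_{6}$, every subgroup $\Delta \leq \Gamma$ of finite index is specified by a sublattice $L = \Delta \cap T$ invariant under the image $\overline{\Delta} \leq P$ of $\Delta$, together with a ``twist'' coset in $T/L$ that encodes how a generator of $\overline{\Delta}$ lifts into $\Gamma/L$. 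The four isomorphism types listed in Theorem \ref{th-1-hexacosm} correspond bijectively to $\overline{\Delta} \in \{\ZZ_{6}, \ZZ_{3}, \ZZ_{2}, 1\}$, and this is the same parametrization already used to establish the formulas for $s_{H,\Gamma}(n)$ in Theorem \ref{th-1-hexacosm}.

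For each type I would translate conjugation by $\Gamma$ into an action on the pairs $(L, \tau)$: translations in $T$ fix $L$ and shift $\tau$ by a coset of $(1-\rho)T$ in $T/L$ (where $\rho$ generates $\overline{\Delta}$), while elements of $P$ simultaneously rotate $L$ and transport $\tau$. This action factors through a finite quotient, so Burnside's lemma expresses $c_{H,\Gamma}(n)$ as an average of fixed-point counts over that finite group. The identity-element contribution recovers a term proportional to $s_{H,\Gamma}(n)$, which already produces the leading $s$-based summand in each of the four claimed formulas.

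The main obstacle is evaluating the fixed-point counts for the non-identity elements of the Burnside sum. For a rotation $\rho' \in P$ with $\rho' \notin \overline{\Delta}$ to fix a pair $(L, \tau)$ up to translation, the sublattice $L$ must acquire the extra symmetry of $\rho'$, which forces divisibility of $n$ by a higher power of $2$ or $3$ and produces exactly the arguments $n/3$, $n/6$, $n/9$, $n/12$, $n/18$ appearing in the statement. Enumerating such extra-symmetric sublattices reduces to sums of the form $\sum_{k \mid m} k\theta(k)$, $\omega(m)$, and $\sigma_{2}(m)$ for the appropriate shifted $m$; the coefficients $1/3$ in (iii) and $1/6$ in (iv) then emerge as $1/|P/\overline{\Delta}|$ from the Burnside averaging. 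Bookkeeping these contributions, verifying their cancellations, and matching them to the precise linear combinations of $\omega$, $\sigma_{2}$, and $\theta$ stated in the theorem is the computational heart of the argument; the analogous case analyses already carried out for $\mathcal{G}_{3}$ in Theorem \ref{th-2-tricosm} and for $\mathcal{G}_{2}, \mathcal{G}_{4}$ in \cite{We2} provide a template I would follow type by type.
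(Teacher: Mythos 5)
Your overall strategy---parametrize a subgroup by a lattice invariant plus a twist coset, split the conjugation action into a translational part and a point-group part, and finish with Burnside's lemma---is essentially the scheme the paper itself uses (its conditions $1^{\circ}$, $2^{\circ}$, the ``partial conjugacy classes'', and the per-case normal subgroups $\Lambda$). The genuine problem is the central bookkeeping claim in your middle step. You assert that the identity element of the Burnside average contributes a term proportional to $s_{H,\pi_{1}(\mathcal{G}_{5})}(n)$ and that this ``already produces the leading $s$-based summand in each of the four claimed formulas'', with the coefficients $1/3$, $1/6$ arising as $1/|P/\overline{\Delta}|$. That is true only in case (iv), where the translation action on twists is trivial and the identity term is $\omega(\frac{n}{6})=s_{\ZZ^3,\pi_{1}(\mathcal{G}_{5})}(n)$. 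In cases (i)--(iii) the leading summands of the stated formulas---$\sum_{k\mid n,(n/k,6)=1}\theta(k)$, $\sum_{k\mid n/2}\theta(k)$, and $\sigma_2(\frac{n}{3})+2\sigma_2(\frac{n}{6})-3\sigma_2(\frac{n}{12})$---are \emph{not} proportional to the corresponding $s$-counts (which are $\sum k\theta(k)$-type sums and $\omega(\frac{n}{3})-\omega(\frac{n}{6})$). What actually produces them is the exact count of translation-conjugation orbits: for a fixed pair $(a,H)$ the number of orbits of the twist $\nu$ is the index $|\Gamma:\langle[\Lambda,Z(\Delta)],H\rangle|$ (in your language, the index of $\langle(1-\rho)T,L\rangle$ inside the relevant rank-two lattice), which is $1$ in case (i), $1$ or $3$ according as $3\nmid n$ or $3\mid n$ in case (ii), and $\sigma_1(\frac{n}{a})+3\sigma_1(\frac{n}{2a})$ after summing over $H$ in case (iii) (the paper's \Cref{number of halfes} and \Cref{number of halfes-remark}). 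Your sketch states the translation action correctly (shift of $\tau$ by $(1-\rho)T$) but never performs this collapse, and the assertion that the $s$-numbers survive as leading terms shows the plan, as written, would not land on formulas (i)--(iii). Note also the internal tension: if you run Burnside over the full finite quotient through which the action factors, the averaging denominator is much larger than $3$ or $6$, so the coefficients $1/3$, $1/6$ cannot ``emerge as $1/|P/\overline{\Delta}|$'' without first passing to orbit counts under the translational (or the paper's $\Lambda$-) action.

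Two further points need attention. First, averaging only over $P/\overline{\Delta}$ requires knowing that the image of $\overline{\Delta}$ acts trivially on the translation-orbits (this uses that $\Delta$ contains an element $h\tilde z^{a}$ with $h$ in the translation lattice), or else one conjugates, as the paper does, by the larger group $\Lambda=\langle\tilde x,\tilde y,\tilde z^{a_0}\rangle$ adapted to each case; you do not address this. Second, your parametrization by $L=\Delta\cap T$ with $T\cong\ZZ^3$ needs the consistency condition that the appropriate power of the chosen lift lies in $L$ (the paper avoids this by using $H=\Delta\cap\langle\tilde x,\tilde y\rangle$ together with the $\tilde z$-exponent $a$). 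Finally, the fixed-point counts for the non-identity elements---the $2\nu=0$ and $\ell(\nu)=\nu$ enumerations $\sigma_1(n)+3\sigma_1(\frac{n}{2})$ and $\theta(n)+2\theta(\frac{n}{3})$, which generate the $n/12$ and $n/18$ arguments and the final cancellations---are exactly the computational content of the theorem and are only deferred to a ``template'' in your proposal, so the stated formulas are not actually reached.
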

%In the Appendix we present the representations of above sequences by
%means of Dirichlet generating functions.
In the Appendix we present the Dirichlet generating functions for
the above sequences.

\section{Preliminaries}
Further we use the representations for the fundamental groups
$\pi(\mathcal{G}_3)$ and $\pi(\mathcal{G}_5)$ given in \cite{Wolf}
and \cite{Conway}.

\begin{equation}\label{fund_G3}
 \pi_{1}(\mathcal{G}_{3})=\langle x, y, z: xyx^{-1}y^{-1}=1,
   zxz^{-1}= y, zyz^{-1}=(xy)^{-1}
 \rangle .
 \end{equation}
\begin{equation}\label{fund_G5}
 \pi_{1}(\mathcal{G}_{5})=\langle \tilde{x}, \tilde{y}, \tilde{z}: \tilde{x}\tilde{y}\tilde{x}^{-1}\tilde{y}^{-1}=1,
   \tilde{z}\tilde{x}\tilde{z}^{-1}= \tilde{x}\tilde{y}, \tilde{z}\tilde{y}\tilde{z}^{-1}=\tilde{x}^{-1}
 \rangle .
 \end{equation}

We will widely use the following statements.

%\begin{proposition}\label{number of sublattices}
%\begin{itemize}
%\item[(i)]The sublattices of index $n$ in the $2$-dimensional
%lattice $\ZZ^2$ are in one-to-one correspondence with the matrices $\begin{pmatrix}   b & c \\
% 0 & a \end{pmatrix}$, where $ab=n$, $0 \le c < b$. Consequently,
%the number of such sublattices is $\sigma_1(n)$.
%\item[(ii)]The sublattices of index $n$ in the $3$-dimensional
%lattice $\ZZ^3$ are in one-to-one correspondence with the integer matrices $\begin{pmatrix} c & e & f \\
%0 & b & d \\ 0 & 0 & a\end{pmatrix}$, where $a,b,c >0, \,abc=n$, $0
%\le d < b$ and $0 \le f,e < c$. Consequently, the number of such
%sublattices is $\omega(n)$.
%\end{itemize}
%\end{proposition}
%{\bf ???? Решить, какой стиль лучше, старый или новый (в статье по
%амфидикосмам). ????}
\begin{proposition}\label{number of sublattices}
Let $\Delta$ be a subgroup of finite index $n$ in $\ZZ^2$. Then
$\Delta$ have a pair of generators of the form $(a,0)$ and $(\mu,b)$
where $a$ and $b$ are positive integers with $ab=n$ and $\mu$ is a
nonnegative integer with $0 \le \mu < a$. Furthermore, the set of
subgroups $\Delta$ with $|\ZZ^2:\Delta|=n$ bijectively corresponds
to the set of pairs of generators of described form. The number of
such subgroups $\Delta$ is $\sigma_1(n)$.

Let $\Delta$ be a subgroup of finite index $n$ in $\ZZ^3$. Then
$\Delta$ have a set of three generators $(a,0,0)$, $(\mu,b,0)$ and
$(\nu,\lambda, c)$ where $a$, $b$, $c$ are positive integers with
$abc=n$, $\mu$, $\nu$ are integers with $0 \le \mu,\nu < a$ and
$\lambda$ is an integer with $0 \le \lambda < b$ . Furthermore, the
set of subgroups $\Delta$ with $|\ZZ^3:\Delta|=n$ bijectively
corresponds to the set of triplets of generators of described form.
The number of such subgroups $\Delta$ is $\omega(n)$.
\end{proposition}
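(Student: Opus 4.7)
The plan is to prove both parts via the Hermite normal form of a sublattice, obtained by successive projection onto coordinates; the count then reduces to an elementary double sum over divisors.

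For the $\mathbb{Z}^2$ case, I would consider the projection $\pi_2 \colon \mathbb{Z}^2 \to \mathbb{Z}$ onto the second coordinate. Its restriction to $\Delta$ has image $b\mathbb{Z}$ for some $b \ge 1$, and kernel $\Delta \cap (\mathbb{Z}\times\{0\})$ which, as a subgroup of the copy $\mathbb{Z}\times\{0\}$, equals $a\mathbb{Z}\times\{0\}$ for some $a \ge 1$. Any preimage of $b$ has the form $(\mu',b)$, and subtracting the appropriate multiple of $(a,0)$ yields a representative $(\mu,b)$ with $0 \le \mu < a$; the pair $\{(a,0),(\mu,b)\}$ generates $\Delta$ by the standard telescoping argument (given $(x,y)\in\Delta$, write $y=kb$, subtract $k(\mu,b)$, then reduce modulo $(a,0)$). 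The values $a$ and $b$ are invariants of $\Delta$, and $\mu$ is the unique element of $[0,a)$ with $(\mu,b)\in\Delta$, so the parametrization is a bijection. The index equals $ab$, so the count is $\sum_{ab=n} a = \sigma_1(n)$.

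For $\mathbb{Z}^3$, I would project onto the third coordinate to get $\pi_3(\Delta) = c\mathbb{Z}$ with $c \ge 1$. The kernel $\Delta \cap (\mathbb{Z}^2\times\{0\})$ is a subgroup of $\mathbb{Z}^2$ of index $n/c$, so by the first part it has a basis $\{(a,0,0),(\mu,b,0)\}$ with $ab=n/c$ and $0\le\mu<a$. Picking any lift $(\nu',\lambda',c)\in\Delta$, I would first reduce $\lambda'$ modulo $b$ by subtracting a multiple of $(\mu,b,0)$, then reduce the resulting first coordinate modulo $a$ by $(a,0,0)$, producing $(\nu,\lambda,c)$ with $0\le\nu<a$, $0\le\lambda<b$. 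The three vectors generate $\Delta$, and invariance of the triple $(a,b,c)$ together with the range conditions on $(\mu,\nu,\lambda)$ gives uniqueness, so admissible tuples $(a,b,c,\mu,\nu,\lambda)$ are in bijection with sublattices of index $n$.

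The enumeration is then
\[
\sum_{\substack{a,b,c\ge 1\\ abc=n}} a\cdot a\cdot b \;=\; \sum_{c\mid n}\frac{n}{c}\,\sigma_1\!\Big(\frac{n}{c}\Big) \;=\; \sum_{m\mid n} m\,\sigma_1(m) \;=\; \omega(n),
\]
where the second equality uses $\sum_{ab=m} a^2 b = \sum_{a\mid m} a\cdot m = m\,\sigma_1(m)$, and the third is the relabelling $m = n/c$. The main obstacle is not any single difficult step but rather the bookkeeping that guarantees the normal form is \emph{unique}: one must verify that $(a,b,c)$ are genuinely invariants of $\Delta$ (read off from the projections and their kernels) and that the offsets $\mu,\nu,\lambda$ are determined once these invariants are fixed, so that the parametrization biject with, rather than merely surjects onto, the set of sublattices.
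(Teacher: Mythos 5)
Your proof is correct, and it is the standard Hermite-normal-form argument (project onto the last coordinate, induct on rank, count the offsets), which is exactly the route behind this well-known statement; the paper itself omits the proof and defers to Lemma 1 of \cite{We2}, where the same projection/normal-form counting is used. Your verification that $\sum_{abc=n}a^2b=\sum_{m\mid n}m\,\sigma_1(m)=\omega(n)$ and your uniqueness bookkeeping for $(a,b,c,\mu,\nu,\lambda)$ are both sound.
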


%\begin{corollary}\label{number of halfes}
%Given an integer $n$, denote by $S(n)$ the sum of the number of
%cosets $\nu \in \ZZ^2/H$ with $2\nu=0$ (we use the additive
%notation) taken over all subgroups $H$ of index $n$ in $\ZZ^2$ .
%Then
%$$
%S(n)=\sigma_1(n)+3\sigma_1(\frac{n}{2}).
%$$
%\end{corollary}
\begin{corollary}\label{number of halfes}
Given an integer $n$, by $S(n)$ denote the number of pairs
$(H,\nu)$, where $H$ is a subgroup of index $n$ in $\ZZ^2$ and $\nu$
is a coset of $\ZZ^2/H$ with $2\nu=0$ (we use the additive
notation). Then
$$
S(n)=\sigma_1(n)+3\sigma_1(\frac{n}{2}).
$$
\end{corollary}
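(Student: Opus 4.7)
The plan is to decompose the count according to whether $\nu$ is trivial or nontrivial in $\mathbb{Z}^2/H$, and to reinterpret the nontrivial case in terms of a containment of sublattices.

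First I would fix $H$ of index $n$ and count the cosets $\nu \in \mathbb{Z}^2/H$ with $2\nu=0$. The contribution from $\nu=0$ is exactly one per subgroup, giving $\sigma_1(n)$ in total by \Cref{number of sublattices}.

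For the remaining contribution from pairs $(H,\nu)$ with $\nu\neq 0$ and $2\nu=0$, I would set up a bijection with pairs $(K,H)$ of sublattices of $\mathbb{Z}^2$ with $H\subset K$, $[\mathbb{Z}^2:K]=n/2$ and $[K:H]=2$. In one direction, given $(H,\nu)$, pick any representative $v$ of $\nu$ and put $K:=H+\mathbb{Z}v$; since $\nu$ has order $2$ in $\mathbb{Z}^2/H$ this gives $[K:H]=2$, hence $[\mathbb{Z}^2:K]=n/2$, and $K$ is independent of the chosen representative. In the other direction, given $(K,H)$ with $H$ of index $2$ in $K$, let $\nu$ be the unique nontrivial element of $K/H$, regarded inside $\mathbb{Z}^2/H$; it obviously satisfies $2\nu=0$. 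A short verification shows the two constructions are mutually inverse.

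To finish I would count the $(K,H)$ pairs. Every finite index sublattice $K$ of $\mathbb{Z}^2$ is itself isomorphic to $\mathbb{Z}^2$, so $K/2K\cong(\mathbb{Z}/2)^2$ has exactly three index-$2$ subgroups, giving three choices of $H\subset K$ for each $K$. By \Cref{number of sublattices}, the number of $K$ of index $n/2$ in $\mathbb{Z}^2$ is $\sigma_1(n/2)$ (with the usual convention that this vanishes when $n$ is odd). Hence the nontrivial case contributes $3\sigma_1(n/2)$, and summing the two cases yields $S(n)=\sigma_1(n)+3\sigma_1(n/2)$.

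There is no real obstacle; the only delicate point is verifying that the assignment $(H,\nu)\mapsto(K,H)$ is well-defined and invertible, which amounts to the observation that in $K/H\cong\mathbb{Z}/2$ the nontrivial element is unique, so $\nu$ is recovered from $(K,H)$ and $K$ is recovered from $(H,\nu)$.
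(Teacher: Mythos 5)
Your proof is correct, but it takes a different route from the one the paper relies on. The paper (deferring details to Lemma~1 of \cite{We2}, and reflected in \Cref{index_and_ker} and \Cref{number of halfes-remark}) counts, for each fixed $H$ in the normal form of \Cref{number of sublattices}, the $2$-torsion of $\ZZ^2/H$ directly --- equivalently the index $|\ZZ^2:\langle (2,0),(0,2),H\rangle|$ --- and then sums over $H$, which involves a parity case analysis on the invariant factors of $\ZZ^2/H$. You instead split off the contribution $\nu=0$ (giving $\sigma_1(n)$) and put the pairs with $\nu\neq 0$ in bijection with flags $H\subset K\subset\ZZ^2$ with $[K:H]=2$ and $[\ZZ^2:K]=n/2$; since each such $K$ contains exactly three index-$2$ sublattices (the index-$2$ subgroups of $K/2K\cong(\ZZ/2)^2$), this yields $3\sigma_1(n/2)$, with the convention that the term vanishes for odd $n$. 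Your bijection is sound: $K=H+\ZZ v$ is independent of the representative $v$ of $\nu$, and $\nu$ is recovered as the unique nontrivial element of $K/H$. The trade-off is that your argument is more transparent --- it explains at once the coefficient $3$ and the shift $n\mapsto n/2$ without any case analysis --- while the paper's kernel-index formulation is the one that is reused verbatim later (in \Cref{number of halfes-remark}, in the analogous count $T(n)$ of \Cref{number of thirds}, and in the conjugacy-class computations for $\mathcal{G}_5$), so it is better adapted to the rest of the machinery.
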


\begin{lemma}\label{index_and_ker}
Let $H\leqslant G$ be an abelian group and its subgroup of finite
index. Let $\phi: G \to G$ be an endomorphism of $G$, such that
$\phi(H)\leqslant H$ and the index $|G:\phi(G)|$ is also finite.
%Since the action of $\phi$ on the cosets of $G/H$ is
%well-defined, we allow ourselves not to introduce a new designation.
Then the cardinality of kernel of $\phi: G/H \to G/H$ equals to the
index $|G : (H+\phi(G))|$.
\end{lemma}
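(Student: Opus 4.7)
The plan is to recognize this as a clean application of the first isomorphism theorem (in its index-counting form for abelian groups) to the endomorphism induced by $\phi$ on the quotient $G/H$.

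First, I would observe that the hypothesis $\phi(H)\leqslant H$ is exactly what is needed to make the formula $\bar\phi(g+H)=\phi(g)+H$ a well-defined endomorphism $\bar\phi:G/H\to G/H$. Since $|G:H|<\infty$, the group $G/H$ is finite, so finiteness of $|\ker\bar\phi|$ and $|\operatorname{im}\bar\phi|$ is automatic, and the first isomorphism theorem gives
\[
|G/H| \;=\; |\ker\bar\phi|\cdot|\operatorname{im}\bar\phi|.
\]

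Second, I would identify the image: by definition $\operatorname{im}\bar\phi=\{\phi(g)+H:g\in G\}$, which is exactly the subgroup $(H+\phi(G))/H$ of $G/H$. (This is where the finiteness assumption on $|G:\phi(G)|$ is consistent with, though not directly needed for, the computation: it merely ensures that $H+\phi(G)$ is a genuine finite-index object of interest.) Then the tower law applied to the chain $H\leqslant H+\phi(G)\leqslant G$ yields
\[
|G:H| \;=\; |G:(H+\phi(G))|\cdot|(H+\phi(G)):H| \;=\; |G:(H+\phi(G))|\cdot|\operatorname{im}\bar\phi|.
\]

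Third, comparing the two factorisations of $|G/H|$ and cancelling the common factor $|\operatorname{im}\bar\phi|$ (which is finite and nonzero) gives $|\ker\bar\phi|=|G:(H+\phi(G))|$, which is precisely the claim since the kernel of $\bar\phi$ is the kernel of $\phi$ viewed modulo $H$.

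Honestly, there is no serious obstacle here: the lemma is a bookkeeping identity, and the only thing to be careful about is the correct identification of $\operatorname{im}\bar\phi$ with $(H+\phi(G))/H$ rather than, say, $\phi(G)/(\phi(G)\cap H)$. Once that is pinned down, the rest is the standard two-line index computation above.
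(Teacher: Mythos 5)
Your proof is correct: the induced map $\bar\phi$ is well defined because $\phi(H)\leqslant H$, its image in $G/H$ is exactly $(H+\phi(G))/H$, and the two index computations combine to give $|\ker\bar\phi|=|G:(H+\phi(G))|$ (you rightly note that finiteness of $|G:\phi(G)|$ is not actually needed once $|G:H|$ is finite). The paper does not reproduce a proof here — it only cites Lemma~1 of \cite{We2} — and your argument is the standard index-counting one that such a lemma admits, so there is nothing to add.
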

%\begin{proof}
%Indeed,
%$$
%|\ker_{\phi}(G/H)|=\frac{|G/H|}{|\phi(G)/(\phi(G)\bigcap H)|}
%$$
%
%By the Second Isomorphism Theorem $\phi(G)/(\phi(G)\cap H)\cong
%(\phi(G)+H)/H$. So
%$$
%|\ker_{\phi}(G/H)|=\frac{|G/H|}{(\phi(G)+H)/H}=|G/(\phi(G)+H)|.
%$$
%\end{proof}
For the proofs, see Lemma 1 in \cite{We2}

\begin{remark}\label{number of halfes-remark}
Combining \Cref{index_and_ker} and \Cref{number of halfes} we get
the following observation. Given a subgroup $H\leqslant \ZZ^2$, the
number of $\nu \in \ZZ^2/H$, such that $2\nu=0$, is equal to
$|\ZZ^2/\langle (2,0), (0,2), H\rangle|$. Indeed, taking $\ZZ^2$ as
$G$, $H$ as $H$ and $\phi: g\to 2g,\, g\in \ZZ^2$ as $\phi$ one gets
the desired equality. Since for each $H$ the numbers $|\{\nu| \nu\in
\ZZ^2/H,\,2\nu=0\}|$ and $|\ZZ^2/\langle (2,0), (0,2), H\rangle|$
coincide, their sums taken over all subgroups $H$ also coincide,
that is
$$
S(n)=\sum_{H\leqslant \ZZ^2,\,|\ZZ^2/H|=n}|\{\nu| \nu\in
\ZZ^2/H,\,2\nu=0\}|=\sum_{H\leqslant \ZZ^2,\,|\ZZ^2/H|=n}
|\ZZ^2/\langle (2,0), (0,2), H\rangle|.
$$
\end{remark}

\begin{definition}\label{ell}
Consider the group $\ZZ^2$. By $\ell$ denote the automorphism $\ell:
\ZZ^2 \to \ZZ^2$ given by $(x,y) \to (-y,x-y)$.
\end{definition}

\begin{lemma}\label{lemG3-3.1}\label{lemG3-3.2}
A subgroup $H \leqslant \ZZ^2$ is preserved by $\ell$ if and only if
$H$ is generated by a pair of elements of the form $(p,q),(-q,p-q)$.
In this case $|\ZZ^2/H|=p^2-pq+q^2$. For a given integer $n$ the
number of invariant under $\ell$  subgroups $H$ of index $n$ in
$\ZZ^2$ is $\theta(n)$.
\end{lemma}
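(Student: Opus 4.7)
The natural plan is to identify $\ZZ^2$ with the ring of Eisenstein integers $\ZZ[\omega]$, where $\omega$ is a primitive cube root of unity satisfying $\omega^2 + \omega + 1 = 0$, via the $\ZZ$-module isomorphism $(p,q) \mapsto p + q\omega$. A direct computation
$$\omega(p + q\omega) = p\omega + q\omega^2 = -q + (p - q)\omega$$
shows that $\ell$ corresponds to multiplication by $\omega$ under this identification. Consequently, an $\ell$-invariant subgroup of $\ZZ^2$ is the same thing as a $\ZZ[\omega]$-submodule, i.e., an ideal of $\ZZ[\omega]$.

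Since $\ZZ[\omega]$ is a Euclidean domain (with Euclidean function $N(\alpha) = \alpha\bar{\alpha}$), it is a principal ideal domain. Every nonzero ideal $H$ is therefore principal, $H = (\alpha)$ with $\alpha = p + q\omega$, and as a $\ZZ$-module $H$ is generated by $\alpha$ and $\omega\alpha$, that is, by the pair $(p, q)$ and $(-q, p - q)$. This handles the \emph{only if} direction of the characterization. For the \emph{if} direction I would check directly that $\langle (p, q), (-q, p - q)\rangle$ is closed under $\ell$: one has $\ell(p, q) = (-q, p - q)$ by definition, and $\ell(-q, p - q) = (q - p, -p) = -(p, q) - (-q, p - q)$, using the identity $1 + \omega + \omega^2 = 0$ (equivalently $\ell^3 = \mathrm{id}$).

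The index formula is then immediate: $|\ZZ^2 / H|$ equals either the absolute value of the determinant of the matrix whose rows are $(p, q)$ and $(-q, p - q)$, or the norm $N(\alpha) = (p + q\omega)(p + q\bar{\omega}) = p^2 - pq + q^2$, using $\omega + \bar{\omega} = -1$ and $\omega\bar{\omega} = 1$.

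Finally, for the enumeration I would set up a correspondence between $\ell$-invariant subgroups of index $n$ and pairs $(p, q)$ with $p > 0,\ q \geq 0$ satisfying $p^2 - pq + q^2 = n$, by selecting for each ideal a distinguished generator $\alpha = p + q\omega$ lying in that region. The main technical obstacle will be the combinatorial check that this selection yields the count $\theta(n)$: one must analyse how the six associates $u\alpha$ (for $u \in \{\pm 1, \pm\omega, \pm\omega^2\}$) of a generator distribute with respect to the sector $\{p > 0,\ q \geq 0\}$ in $\ZZ^2$, paying careful attention to the boundary loci $q = 0$ and $q = p$ where associates can collapse, so that the total tally over all ideals agrees with $\theta(n)$ as defined.
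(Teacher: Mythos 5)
Your structural argument (the equivalence and the index formula) is correct, and it takes a genuinely different route from the paper: the paper proves the ``only if'' direction by an elementary minimal-vector argument (pick $u\in H\setminus\{0\}$ minimizing $d(p,q)=p^2-pq+q^2$ and show by a covering-by-triangles estimate that $u$ and $\ell(u)$ already generate $H$), whereas you obtain it at once from the identification of $(\ZZ^2,\ell)$ with the Eisenstein integers and the fact that $\ZZ[\omega]$ is a principal ideal domain; the index computation is the same determinant/norm count in both. For that part your proposal is fine and arguably cleaner, at the cost of importing the PID fact that the paper in effect reproves by hand.

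The genuine gap is the enumeration, which is the substantive third assertion and which you only announce as ``the main technical obstacle''; moreover the difficulty you anticipate is not the real one. Associates never collapse: $u\alpha=\alpha$ with a unit $u\neq 1$ forces $\alpha=0$, so every nonzero ideal has exactly six generators $\pm\alpha,\pm\omega\alpha,\pm\omega^2\alpha$, with arguments spaced by $60^\circ$. The region $\{p>0,\ q\ge 0\}$ is the cone of arguments $[0^\circ,120^\circ)$ in the Eisenstein picture; it is a fundamental domain for the rotation subgroup $\{1,\omega,\omega^2\}$ (i.e.\ for $\langle\ell\rangle$) but \emph{not} for the full unit group, so each ideal of norm $n$ contributes exactly \emph{two} generators to this region --- for instance $(1,0)$ and $(1,1)$ both generate $\ZZ^2$, and $(3,1)$, $(2,3)$ generate the same index-$7$ sublattice. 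Consequently the honest tally of your selection gives: the number of $\ell$-invariant subgroups of index $n$ equals the number of ideals of norm $n$, i.e.\ $\sum_{k\mid n}\left(\frac{k}{3}\right)$, which is \emph{half} the cardinality of the printed set $\{(p,q): p>0,\ q\ge 0,\ p^2-pq+q^2=n\}$. So the plan ``check the tally agrees with $\theta(n)$ as defined'' cannot be completed verbatim: you must either shrink the region to a true fundamental domain for all six units (e.g.\ $p>q\ge 0$) and prove that exactly one associate of each generator lies there, or read $\theta$ through the identity $\theta(n)=\sum_{k\mid n}\left(\frac{k}{3}\right)$ stated in the paper's Remark --- which is the reading the later formulas force anyway (Theorem \ref{th-1-tricosm} at $n=1$ requires $\theta(1)=1$, while the printed region contains both $(1,0)$ and $(1,1)$). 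Be aware that the paper's own injectivity step has the same soft spot (for two region pairs generating the same $H$ the difference can have the same minimal norm, as with $(1,0)$ and $(1,1)$, so the claimed strict inequality fails); your ideal-theoretic framework makes the correct count transparent, but only after the fundamental-domain issue above is addressed explicitly.
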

\begin{proof}
Suppose $H$ is generated by elements $(p,q)$ and
$(-q,p-p)=\ell\big((p,q)\big)$. Then obviously $\ell(H)=H$. Also
$|\ZZ^2/H|=p^2-pq+q^2$, since $p^2-pq+q^2$ is the number of integer
points in a fundamental domain of $H$ on the plane.

Vice versa, suppose $\ell(H)=H$. Denote $d(x,y)=x^2-xy+y^2$. Let
$u=(p,q)\in H\setminus\{0\}$ be an element with the minimal value of
$d(u)$. Consider the subgroup $H_1=\langle u,\ell(u) \rangle
\leqslant H$. Assume $H_1 \neq H$ and $v \in H\setminus H_1$. Since
$H_1=\langle u,\ell(u) \rangle$, the fundamental domain of $H_1$ is
a parallelogram with vertices $0,u,\ell(u),u+\ell(u)$. That means
that the plane splits into the parallelograms of the form
$w,w+u,w+\ell(u),w+u+\ell(u), w \in H_1$, each of them splits into
two right triangles. One of this triangles contains $v$. Note that
the distance from a point inside a right triangle to one of its
vertices is not greater then the side of this triangle. This
contradicts the minimality of $d(u)$, thus $H_1=H$.

To find the number of subgroups $H$ note that the number of pairs
$(p,q)$ with $p^2-pq+q^2=n,\,p>0,\,q\ge0$ is $\theta(n)$. As it was
proven above, for each pair $(p,q)$ of the above type two pairs
$(p,q)$ and $\ell\big((p,q)\big)$ generate a subgroup $H$ of the
required type. Moreover $d(p,q)$ takes the minimal value among
$d(v),\,v\in H\setminus\{0\}$. Suppose two different pairs $(p,q)$
and $(p',q')$ correspond the same subgroup $H$. Then $(p-p',q-q')\in
H$ and $0<d(p-p',q-q')<d(p,q)$, which contradiction proves that
there is a one-to-one correspondence between pairs $(p,q)$ and
subgroups $H$.
\end{proof}

Before formulating the next corollary note that $\ell(\nu), \nu \in
\ZZ^2/H$ is well-defined if $\ell(H) \le H$.

\begin{corollary}\label{number of thirds}
Let $n$ be an integer. Consider the set of all subgroups $H$ of
%index $n$ in $\ZZ^2$, such that $\ell(H)= H$. Put
%$\kappa(H)=|\{\nu|\,\nu \in \ZZ^2/H,\, \ell(\nu)=\nu\}|$. By $T(n)$
%denote the sum of $\kappa(H)$ over all subgroups $H$ of the required
%form, i.e. $T(n)=\sum_{|\ZZ^2:H|=n,\,\ell(H)=H}\kappa(H)$. Then
%$$
%T(n)=\theta(n)+2\theta(\frac{n}{3}).
%$$
%{\bf ???? Выбрать уже наконец одну из двух формулировок. ????}
Given an integer $n$, by $T(n)$ denote the number of pairs
$(H,\nu)$, where $H$ is a subgroup of index $n$ in $\ZZ^2$ with
$\ell(H)=H$ and $\nu$ is a coset of $\ZZ^2/H$ with $\ell(\nu)=\nu$.
Then
$$
T(n)=\theta(n)+2\theta(\frac{n}{3}).
$$
\end{corollary}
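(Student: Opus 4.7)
The plan is to count, for each $\ell$-invariant subgroup $H\leqslant\ZZ^2$ of index $n$, the number of $\ell$-fixed cosets in $\ZZ^2/H$, and then sum over all such $H$. By \Cref{lemG3-3.2} the outer sum has $\theta(n)$ terms, so the content lies entirely in the inner count.

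To carry out the inner count I would rewrite $\ell(\nu)=\nu$ as $\phi(\nu)=0$ in $\ZZ^2/H$, where $\phi:=\ell-\mathrm{id}$, and invoke \Cref{index_and_ker}. Direct computation gives $\phi(1,0)=(-1,1)$ and $\phi(0,1)=(-1,-2)$, so $L:=\phi(\ZZ^2)$ is a sublattice of index $3$; it is $\ell$-invariant because $\ell$ commutes with $\phi$, and $\phi(H)\subseteq H$ because $\ell(H)=H$. Hence \Cref{index_and_ker} yields that the number of $\ell$-fixed cosets in $\ZZ^2/H$ equals $|\ZZ^2:(H+L)|$. Since $L$ has prime index $3$, the subgroup $H+L$ equals either $\ZZ^2$ or $L$, the latter occurring precisely when $H\subseteq L$. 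So this index is $3$ when $H\subseteq L$ and $1$ otherwise, giving $T(n)=\theta(n)+2N(n)$, where $N(n)$ counts the $\ell$-invariant index-$n$ subgroups of $\ZZ^2$ contained in $L$.

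The main obstacle, and the step that makes the formula close cleanly, is identifying $N(n)$ with $\theta(n/3)$. For this I would exhibit an $\ell$-equivariant isomorphism $(L,\ell|_L)\cong(\ZZ^2,\ell)$. Concretely, set $u_1=(-1,1)$ and $u_2:=\ell(u_1)=(-1,-2)$; these form a $\ZZ$-basis of $L$, and one verifies $\ell(u_2)=-u_1-u_2$, which matches the action of $\ell$ on the standard basis of $\ZZ^2$ (namely $e_1\mapsto e_2$, $e_2\mapsto -e_1-e_2$). Under this isomorphism, $\ell$-invariant index-$n$ subgroups of $\ZZ^2$ contained in $L$ correspond bijectively to $\ell$-invariant index-$(n/3)$ subgroups of $\ZZ^2$, which are counted by $\theta(n/3)$ via \Cref{lemG3-3.2}. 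Substituting gives $T(n)=\theta(n)+2\theta(n/3)$, with the usual convention $\theta(n/3)=0$ when $3\nmid n$.
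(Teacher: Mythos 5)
Your proof is correct, but it runs along a genuinely different route than the paper's. The paper argues in coordinates: it takes the generators $(p,q),(-q,p-q)$ of an $\ell$-invariant $H$ of index $n$, writes the condition $\nu-\ell(\nu)\in H$ for a representative $(a,b)$ of $\nu$, and solves the resulting linear system explicitly, finding exactly one fixed coset when $3\nmid p+q$ and exactly three when $3\mid p+q$ (equivalently $3\nmid n$ or $3\mid n$); it then rewrites $\theta(n)$, resp. $3\theta(n)$, in the stated form using the arithmetic fact $\theta(n/3)=\theta(n)$ for $3\mid n$. You instead convert the fixed-coset count into the index $|\ZZ^2:(H+L)|$ with $L=(\ell-\mathrm{id})(\ZZ^2)$ via \Cref{index_and_ker} --- which is precisely the reformulation the paper records only afterwards in \Cref{number of thirds-remark}, not in its proof --- then use that $L$ has prime index $3$ to get the dichotomy $H+L\in\{\ZZ^2,L\}$, and finally count the $\ell$-invariant $H\subseteq L$ by the $\ell$-equivariant isomorphism $(L,\ell|_L)\cong(\ZZ^2,\ell)$ (your basis $u_1,u_2=\ell(u_1)$ computation is correct), which gives $\theta(n/3)$ directly from \Cref{lemG3-3.2}. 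What each approach buys: the paper's computation is elementary and exhibits the fixed cosets explicitly, but needs the identity $\theta(3m)=\theta(m)$ to reach the stated formula; your argument is coordinate-free, never invokes that identity (indeed, combined with the paper's count it reproves it), and the equivariant rescaling $L\cong\ZZ^2$ is a structural step with no counterpart in the paper.
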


\begin{proof}
Consider subgroup $H \leqslant \ZZ^2$ with $|\ZZ^2:H|=n$ and
$\ell(H)=H$. \Cref{lemG3-3.1} claims that $H$ has a pair of
generators $(p,q),(-q,p-q)$, where $p^2-pq+q^2=n$. Suppose
$\ell(\nu)=\nu$ holds for some coset $\nu \in \ZZ^2/H$. Let
$(a,b)\in \ZZ^2$ be a representative of coset $\nu$. Then
$\nu-\ell(\nu)=(a+b,-a+2b)\in i(p,q)+j(-q,p-q)$. That is
$(a,b)=i(\frac{2p-q}{3},\frac{p+q}{3})+j(\frac{-p-q}{3},\frac{p-2q}{3})$
for some integer $i,j$. Then modulo $\langle (p,q),(-q,p) \rangle$
there are only three different choices for pairs $(a,b)$
corresponding to $i=j=0$, $i=j=1$ and $i=j=2$. The first pair is
always integer, the latter two are integer if and only if $p+q\equiv
0 \mod 3$. Also, $p+q\equiv 0 \mod 3$ if and only if $3\mid
p^2-pq+q^2=n$. That is, for a fixed $H$ there is one choice of $\nu$
if $3\nmid n$ and three choices if $3\mid n$. By \Cref{lemG3-3.1},
the number of possible subgroups $H$ is $\theta(n)$. So
$R(n)=\theta(n)$ if $3 \nmid n$ and $R(n)=3\theta(n)$ if $3 \mid n$.
Finally note $\theta(\frac{n}{3})=\theta(n)$ if $3 \mid n$ and
$\theta(\frac{n}{3})=0$ otherwise. Then we have the required
$R(n)=\theta(n)+2\theta(\frac{n}{3})$.
\end{proof}
\begin{remark}\label{number of thirds-remark}
Similar to \Cref{number of halfes-remark} get
$$
T(n)=\sum_{\ell(H)=H< \ZZ^2,\,|\ZZ^2/H|=n}|\{\nu| \nu\in
\ZZ^2/H,\,\ell(\nu)=\nu\}|=\sum_{\ell(H)=H< \ZZ^2,\,|\ZZ^2/H|=n}
|\ZZ^2/\langle (1,-1), (1,2), H\rangle|.
$$
\end{remark}

\section{On the covering of $\mathcal{G}_{3}$}
\subsection{The structure of the group $\pi_{1}(\mathcal{G}_{3})$ }
The following proposition provides the canonical form of an element
in $\pi_{1}(\mathcal{G}_{3})=\langle x, y, z: xyx^{-1}y^{-1}=1,
   zxz^{-1}= y, zyz^{-1}=(xy)^{-1}\rangle$. The proof is similar to the proof of
Proposition 2 in \cite{We2}.

\begin{proposition}\label{propG3-1}
\begin{itemize}
\item[(i)] Each element of $\pi_{1}(\mathcal{G}_{3})$ can be represented in the canonical form $x^ay^bz^c$
for some integer $a,b,c$.
\item[(ii)] The product of two canonical forms is given by the
formula
\begin{equation}\label{multlawG3}
x^ay^bz^c \cdot x^{d}y^{e}z^{f}= \left\{
\begin{aligned}
x^{a+d}y^{b+e}z^{c+f} \quad \text{if} \quad c\equiv 0\mod3 \\
{x}^{a-e}{y}^{b+d-e}{z}^{c+f} \quad \text{if} \quad c\equiv 1\mod3 \\
{x}^{a-d+e}{y}^{b-d}{z}^{c+f} \quad \text{if} \quad c\equiv 2\mod3 \\
\end{aligned} \right.
\end{equation}
\item[(iii)] The canonical epimorphism $\phi_{\mathcal{G}3}: \pi_{1}(\mathcal{G}_{3}) \to
\pi_{1}(\mathcal{G}_{3})/\langle x,y\rangle \cong \ZZ$, given by the
formula $x^ay^bz^c \to c$ is well-defined.
\item[(iv)] The representation in the canonical form $g=x^ay^bz^c$ for each element $g \in \pi_{1}(\mathcal{G}_{3})$ is
unique.
\end{itemize}
\end{proposition}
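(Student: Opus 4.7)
The plan is to follow the canonical-form argument for semidirect-product Bieberbach groups, in the same spirit as Proposition~2 of \cite{We2}. I establish (i) first, then (ii), and finally treat (iii) and (iv) together by producing an explicit model of $\pi_1(\mathcal{G}_3)$ as a semidirect product.

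For (i), I derive from the defining relations and their inverses the four conjugation formulas
\[
zxz^{-1}=y,\quad zyz^{-1}=x^{-1}y^{-1},\quad z^{-1}xz=x^{-1}y^{-1},\quad z^{-1}yz=x,
\]
which together show that $\langle x,y\rangle$ is normalized by $z^{\pm1}$. Given an arbitrary word in $x^{\pm 1},y^{\pm 1},z^{\pm 1}$, I shuttle every $z^{\pm 1}$ past the letters to its left using these formulas, collecting all $z$'s on the right; the remaining word lies in $\langle x,y\rangle$ and, because $xy=yx$, collapses to some $x^ay^b$.

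For (ii), introduce the automorphism $\sigma\colon \langle x,y\rangle\to\langle x,y\rangle$ given by $\sigma(x)=y$ and $\sigma(y)=x^{-1}y^{-1}$, which is realized by conjugation by $z$. A direct computation yields $\sigma^2(x)=x^{-1}y^{-1}$, $\sigma^2(y)=x$, and $\sigma^3=\mathrm{id}$. The identity
\[
x^ay^bz^c\cdot x^dy^ez^f \;=\; x^ay^b\cdot \sigma^c(x^dy^e)\cdot z^{c+f}
\]
then yields the three cases of (\ref{multlawG3}) by expanding $\sigma^c(x^dy^e)$ according to the residue of $c$ modulo $3$.

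The delicate point is (iv): nothing proved so far rules out hidden relations such as $x^ay^b=1$ with $(a,b)\ne(0,0)$, or a cancellation between $\langle x,y\rangle$ and powers of $z$. I handle this by exhibiting an explicit model. Form the semidirect product $N=\ZZ^2\rtimes_\sigma\ZZ$ in which the generator of $\ZZ$ acts on $\ZZ^2$ by the matrix of $\sigma$, an element of order $3$ in $\mathrm{Aut}(\ZZ^2)$. Elements of $N$ are written uniquely as $((a,b),c)$, and multiplication in $N$ reproduces (\ref{multlawG3}). The assignment $x\mapsto((1,0),0)$, $y\mapsto((0,1),0)$, $z\mapsto((0,0),1)$ satisfies all three defining relations and therefore extends to a surjective homomorphism $\Phi\colon \pi_1(\mathcal{G}_3)\to N$. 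By (i) every element has the form $x^ay^bz^c$, which maps to $((a,b),c)$; this is the identity only when $(a,b,c)=(0,0,0)$, so $\Phi$ is an isomorphism, proving (iv). The canonical epimorphism (iii) is the composition of $\Phi$ with the projection $N\to\ZZ$, $((a,b),c)\mapsto c$, which visibly kills $\langle x,y\rangle$ and sends $z$ to $1$. The main obstacle is precisely the construction of $N$ and the verification that the three relations hold there; everything else is bookkeeping.
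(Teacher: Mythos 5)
Your proposal is correct: the conjugation formulas, the order-three automorphism $\sigma$ giving exactly the three cases of (\ref{multlawG3}), and the verification of the relations in the explicit model $\ZZ^2\rtimes_\sigma\ZZ$ (whence uniqueness of the canonical form and well-definedness of $\phi_{\mathcal{G}3}$) all check out. The paper omits the proof as ``routinely follows from the definition'' (referring to the analogous Proposition 2 of \cite{We2}), and your argument supplies precisely the standard details that are intended there, including the semidirect-product model needed to exclude hidden relations.
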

%\begin{proof}
%The proof is similar to the proof of \Cref{propG2-1}.
%?????????? Передоказать или сослаться на другую статью ?????
%\end{proof}
Routinely follows from the definition of the group.

\smallskip
{\bf Notation.} By $\Gamma$ denote the subgroup of
$\pi_{1}(\mathcal{G}_{3})$ generated by $x,y$.
%Set $\Gamma=\langle x,y\rangle$.
%{\bf Notation.} Denote $\Gamma=\langle x,y\rangle$.
%By $\Gamma$ denote the subgroup of $\pi_{1}(\mathcal{G}_{3})$,
%generated by $x$ and $y$.
\smallskip

Our goal is to introduce some easy invariants, similar to those of
\Cref{number of sublattices}.
%That will let us to enumerate the
%subgroups.
% This is done in the following two definitions and
%\Cref{propG3-4}.

%In the next definition we introduce the invariants, similar to one
%used in \Cref{number of sublattices}.

\begin{definition}\label{defG3-all}
Suppose all elements of $\pi_{1}(\mathcal{G}_{3})$ are represented
in the canonical form. Let $\Delta$ be a subgroup of finite index
$n$ in $\pi_{1}(\mathcal{G}_{3})$. Put $H(\Delta)=\Delta\bigcap
\Gamma$. By $a(\Delta)$ denote the minimal positive exponent of $z$
among all the elements of $\Delta$. Choose an element $Z_\Delta$
with such exponent of $z$, represented in the form
$Z_\Delta=hz^{a(\Delta)}$, where $h \in \Gamma$. By
$\nu(\Delta)=hH(\Delta)$ denote the coset in the coset decomposition
$\Gamma/H(\Delta)$.
%For the coset decomposition
%$\Gamma/H(\Delta)$ we will use the additive notation.
%By $Y(\Delta)$ and $X(\Delta)$
%denote a pair of generators of $H(\Delta)$ of the form, provided by
%\Cref{number of sublattices}, that is
%$Y(\Delta)=x^{e(\Delta)}y^{b(\Delta)}$, $X(\Delta)=x^{c(\Delta)}$
%where $0\le e(\Delta) < c(\Delta)$. Further we will omit $\Delta$
%for $X(\Delta),Y(\Delta),Z(\Delta)$.
\end{definition}

%It is worth noting that, despite the choice of $Z$ and $h$ is not
%unique, the choice of $\nu(\Delta)$, $X$ and $Y$ is unique.
Note that the invariants $a(\Delta)$, $H(\Delta)$ and $\nu(\Delta)$
are well-defined. In particular the latter one does not depends on a
choice of $Z_\Delta$. Also
%$a(\Delta)b(\Delta)c(\Delta)=a(\Delta)[\Gamma:H(\Delta)]=[\pi_{1}(\mathcal{G}_{2}):\Delta]$.
$a(\Delta)[\Gamma:H(\Delta)]=[\pi_{1}(\mathcal{G}_{2}):\Delta]$.

\begin{definition}\label{defG3-3-plet}
A 3-plet $(a,H,\nu)$ is called {\em $n$-essential} if the following
conditions holds:
\begin{itemize}
\item[(i)] $a$ is a positive divisor of $n$,
\item[(ii)] $H$ is a subgroup of index $n/a$ in $\Gamma$ with $H\lhd \pi_1(\mathcal{G}_3)$ if $3\nmid
a$,
%and $[\ZZ^2 : \phi(\Delta)\big]=n/l(\Delta)$,
\item[(iii)] $\nu$ is an element of $\Gamma/H$.
\end{itemize}
\end{definition}

The next proposition show that the introduced invariants are
sufficient to enumerate the subgroups of finite index.

\begin{proposition}\label{propG3-4}
There is a bijection between the set of $n$-essential 3-plets
$(a,H,\nu)$ and the set of subgroups $\Delta$ of index $n$ in
$\pi_1(\mathcal{G}_5)$, such that
$(a,H,\nu)=(a(\Delta),H(\Delta),\nu(\Delta))$, given by the
correspondence $\Delta \leftrightarrow
(a(\Delta),H(\Delta),\nu(\Delta))$. Moreover, $\Delta \cong \ZZ^3$
if $3\mid a(\Delta)$ and $\Delta\cong \pi_{1}(\mathcal{G}_{3})$
otherwise.
\end{proposition}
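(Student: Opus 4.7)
The plan is to verify that $\Delta \mapsto (a(\Delta), H(\Delta), \nu(\Delta))$ is a bijection onto the set of $n$-essential 3-plets by constructing an explicit inverse and checking the invariants round-trip, and then to read off the isomorphism type of $\Delta$ from the action of $z^{a(\Delta)}$. (I interpret the statement as concerning subgroups of index $n$ in $\pi_{1}(\mathcal{G}_{3})$, since the invariants $a(\Delta)$, $H(\Delta)$, $\nu(\Delta)$ and the subgroup $\Gamma$ appearing in the $n$-essential condition are defined only in terms of $\pi_{1}(\mathcal{G}_{3})$.)

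For the forward direction, for any finite-index subgroup $\Delta \le \pi_{1}(\mathcal{G}_{3})$, the image $\phi_{\mathcal{G}3}(\Delta) \subseteq \ZZ$ is nonzero with positive generator $a(\Delta)$, which divides $n = a(\Delta)\cdot[\Gamma:H(\Delta)]$, giving condition (i) and the index part of (ii). For the normality clause, the crucial point is that $z^{3}$ is central in $\pi_{1}(\mathcal{G}_{3})$ (a direct check from \eqref{fund_G3}), so conjugation by $z$ induces an order-$3$ automorphism of $\Gamma \cong \ZZ^{2}$; under $x\leftrightarrow(1,0)$, $y\leftrightarrow(0,1)$, this automorphism is exactly the $\ell$ of \Cref{ell}. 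Writing $Z_{\Delta} = h\,z^{a(\Delta)}$ and using that $\Gamma$ is abelian, conjugation by $Z_{\Delta}$ acts on $\Gamma$ as $\ell^{a(\Delta)}$ and preserves $H(\Delta)$; if $3 \nmid a(\Delta)$ then $\ell^{a(\Delta)}$ generates $\langle \ell \rangle$, so $H(\Delta)$ is $\ell$-invariant and hence normal in $\pi_{1}(\mathcal{G}_{3})$.

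For the inverse direction, given a 3-plet $(a,H,\nu)$, pick any $h \in \Gamma$ representing $\nu$ and set $\Delta := \langle H, h z^{a} \rangle$. Independence of $h$ follows since $H \subseteq \Delta$. To see $\Delta$ is a subgroup one checks that conjugation by $h z^{a}$ preserves $H$: when $3 \mid a$ this is trivial, since $z^{a}$ is central; when $3 \nmid a$ it acts as $\ell^{a}$, which preserves $H$ by the normality hypothesis in (ii). Applying the multiplication rule \eqref{multlawG3} repeatedly, every element of $\Delta$ has the canonical shape $h' z^{ja}$ with $h' \in \Gamma$, $j \in \ZZ$, and lies in $\Delta$ iff $h' H = \nu^{j}$ in $\Gamma/H$; a straightforward coset count yields $[\pi_{1}(\mathcal{G}_{3}):\Delta] = a \cdot [\Gamma:H] = n$, together with $a(\Delta)=a$, $H(\Delta)=H$, $\nu(\Delta)=\nu$. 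Hence the two constructions are mutually inverse.

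For the isomorphism type, if $3 \mid a(\Delta)$ then $Z_{\Delta}$ commutes with all of $\Gamma$, and since $\phi_{\mathcal{G}3}(Z_{\Delta}) = a(\Delta) \neq 0$ gives $Z_{\Delta}$ infinite order modulo $\Gamma$, we get $\Delta = H(\Delta) \oplus \langle Z_{\Delta} \rangle \cong \ZZ^{3}$. If $3 \nmid a(\Delta)$, by \Cref{lemG3-3.1} the subgroup $H(\Delta)$ has a basis $X=(p,q)$, $Y=\ell(X)=(-q,p-q)$ satisfying $\ell(Y) = -X-Y = (XY)^{-1}$, and one checks that $(X,Y,Z_{\Delta})$ satisfy the defining relations \eqref{fund_G3} of $\pi_{1}(\mathcal{G}_{3})$, after replacing $Z_{\Delta}$ by $Z_{\Delta}^{-1}$ and relabeling in the sub-case $a(\Delta) \equiv 2 \pmod{3}$. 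The main subtlety I anticipate is precisely this last orientation issue: the action of $Z_{\Delta}$ on $H(\Delta)$ can be either $\ell$ or $\ell^{-1}$, and I must verify cleanly that both semidirect-product structures on $\ZZ^{2} \rtimes \ZZ$ with a nontrivial order-$3$ action are isomorphic to $\pi_{1}(\mathcal{G}_{3})$, rather than to some group with the opposite chirality.
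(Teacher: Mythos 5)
Your overall route coincides with the paper's: the same three invariants $(a(\Delta),H(\Delta),\nu(\Delta))$, the same normality argument via the order-$3$ automorphism $\ell$ induced on $\Gamma$ by conjugation with $z$ (this is the paper's \Cref{lemG3-3.3}), an explicit inverse construction for the bijection (the paper's \Cref{lemG3-3.6} writes the subgroup out as a union of cosets rather than as $\langle H, hz^{a}\rangle$, but it is the same subgroup), and identification of the isomorphism type by exhibiting generators $X,Y,Z$ of $\Delta$ satisfying the relations of \eqref{fund_G3}. You are also right that the occurrence of $\pi_1(\mathcal{G}_5)$ in the statement is a slip for $\pi_1(\mathcal{G}_3)$, and your handling of the chirality issue (invert $Z_\Delta$, or equivalently swap $X$ and $Y$) matches the paper's treatment of the case $a(\Delta)\equiv 2 \pmod 3$.

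The one genuine gap is in the final step. Verifying that $X$, $Y$, $Z_\Delta$ satisfy the defining relations of $\pi_{1}(\mathcal{G}_{3})$ only produces an epimorphism $\pi_{1}(\mathcal{G}_{3})\to\Delta$; it does not by itself show that this map has trivial kernel, and your write-up stops there. The paper spends the second half of its proof on exactly this point: assuming an ``improper relation'' $X^{a}Y^{b}Z^{c}=X^{a'}Y^{b'}Z^{c'}$, it applies $\phi_{\mathcal{G}3}$ to force $c=c'$ and then uses the nonvanishing of $p^{2}-pq+q^{2}=n/a(\Delta)$ to force $a=a'$ and $b=b'$. The ingredients for this are already in your setup ($X,Y$ form a basis of $H(\Delta)\cong\ZZ^{2}$ and $\phi_{\mathcal{G}3}(Z_\Delta)=a(\Delta)\neq 0$, so normal forms $X^{a}Y^{b}Z^{c}$ in $\Delta$ are unique), so the repair is short, but as written the conclusion $\Delta\cong\pi_{1}(\mathcal{G}_{3})$ does not follow from what you have proved. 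By contrast, the subtlety you flag at the end --- whether the two chiralities of the $\ZZ$-action on $\ZZ^{2}$ yield isomorphic semidirect products --- is real but is the easier of the two issues and is disposed of exactly as you propose.
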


The next few lemmas are auxiliary statements needed for the proof of
\Cref{propG3-4}.

\begin{lemma}\label{lemG3-3.3}
If $3 \nmid a(\Delta)$ then $H(\Delta)\lhd \pi_1(\mathcal{G}_3)$.
\end{lemma}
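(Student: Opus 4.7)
The plan is to reduce normality of $H(\Delta)$ in $\pi_1(\mathcal{G}_3)$ to a single condition, namely invariance under the automorphism $\ell$, and then to exploit the fact that $\ell$ has order $3$ together with the coprimality assumption $3 \nmid a(\Delta)$.

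First I would observe that since $\Gamma = \langle x,y\rangle$ is abelian and $H(\Delta) \leqslant \Gamma$, conjugation by any element of $\Gamma$ acts trivially on $H(\Delta)$. Combined with the canonical form $x^a y^b z^c$ from \Cref{propG3-1}, this means that $H(\Delta)$ is normal in $\pi_1(\mathcal{G}_3)$ if and only if conjugation by $z$ preserves $H(\Delta)$. From the defining relations $zxz^{-1}=y$ and $zyz^{-1}=(xy)^{-1}$, conjugation by $z$ on $\Gamma \cong \ZZ^2$ (identifying $x^a y^b$ with $(a,b)$) sends $(a,b)$ to $(-b,a-b)$, which is exactly the automorphism $\ell$ of \Cref{ell}. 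Thus the problem reduces to showing $\ell(H(\Delta)) = H(\Delta)$.

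Next I would use the element $Z_\Delta = h z^{a(\Delta)} \in \Delta$. For any $h' \in H(\Delta)$ one has $Z_\Delta h' Z_\Delta^{-1} \in \Delta$, and a direct computation using abelianness of $\Gamma$ gives
\[
Z_\Delta h' Z_\Delta^{-1} = h \, \ell^{a(\Delta)}(h') \, h^{-1} = \ell^{a(\Delta)}(h') \in \Gamma.
\]
Hence $\ell^{a(\Delta)}(H(\Delta)) \subseteq \Delta \cap \Gamma = H(\Delta)$. Because $\ell^{a(\Delta)}$ is an automorphism of $\Gamma$, the image $\ell^{a(\Delta)}(H(\Delta))$ has the same finite index in $\Gamma$ as $H(\Delta)$, so in fact $\ell^{a(\Delta)}(H(\Delta)) = H(\Delta)$.

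Finally I would compute that $\ell^3$ is the identity on $\ZZ^2$ (a direct three-line calculation: $(a,b)\mapsto(-b,a-b)\mapsto(b-a,-a)\mapsto(a,b)$). Therefore $\ell$ has order exactly $3$. The hypothesis $3 \nmid a(\Delta)$ means $\gcd(a(\Delta),3)=1$, so $\ell^{a(\Delta)}$ generates the same cyclic group as $\ell$; in particular $\ell$ itself lies in $\langle \ell^{a(\Delta)}\rangle$, which gives $\ell(H(\Delta)) = H(\Delta)$ and concludes the proof. The argument is essentially automatic once these ingredients are in place; the only substantive point is recognizing that the $z$-conjugation action on $\Gamma$ is the order-$3$ automorphism $\ell$ and then applying the coprimality of $a(\Delta)$ with $3$.
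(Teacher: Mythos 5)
Your proof is correct and follows essentially the same route as the paper: both arguments rest on the facts that conjugation by $Z_\Delta=hz^{a(\Delta)}\in\Delta$ preserves $H(\Delta)=\Delta\cap\Gamma$, that $x$, $y$ and $z^3$ act trivially on $\Gamma$, and that $3\nmid a(\Delta)$ recovers the conjugation by $z$ itself (which you phrase, correctly, as $\ell\in\langle\ell^{a(\Delta)}\rangle$ since $\ell$ has order $3$). Your write-up simply fills in the details that the paper's three-line proof leaves implicit.
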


\begin{proof}
Recall $Z_\Delta=hz^{a(\Delta)} \in \Delta$. Then
$H(\Delta)^{Z_\Delta}=H(\Delta)$. Since
$H(\Delta)^x=H(\Delta)^y=H(\Delta)^{z^3}=H(\Delta)$, the former fact
yields $H(\Delta)^g=H(\Delta),\; g\in \pi_1(\mathcal{G}_3)$.
\end{proof}

\begin{lemma}\label{lemG3-3.6}
For arbitrary $n$-essential 3-plet $(a,H,\nu)$ there exists a
subgroup $\Delta$ in the group $\pi_1(\mathcal{G}_3)$ such that
$(a,H,\nu)=(a(\Delta),H(\Delta),\nu(\Delta))$.
\end{lemma}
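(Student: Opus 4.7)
The plan is to build $\Delta$ by hand from the data $(a,H,\nu)$. I fix any representative $h\in\Gamma$ of the coset $\nu$ and set
$$\Delta\;:=\;\langle H,\,hz^{a}\rangle.$$
I would then verify that the invariants $(a(\Delta),H(\Delta),\nu(\Delta))$ attached to $\Delta$ by \Cref{defG3-all} agree with $(a,H,\nu)$ and that $|\pi_1(\mathcal{G}_3):\Delta|=n$.

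The preliminary observation is that $\ell^{a}(H)=H$, where $\ell$ is the order-$3$ automorphism of $\Gamma$ from \Cref{ell} (which, by the relations in \eqref{fund_G3}, is precisely conjugation by $z$). When $3\mid a$ this is automatic from $\ell^{3}=\mathrm{id}$. When $3\nmid a$, $n$-essentiality of the 3-plet forces $H\lhd\pi_{1}(\mathcal{G}_{3})$, hence $\ell(H)=H$; and since $\ell^{a}$ generates $\langle\ell\rangle$ in that case we still have $\ell^{a}(H)=H$.

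With this invariance in place, a direct cocycle computation shows that every element of $\Delta$ has a unique canonical form $\gamma\,h_{k}\,z^{ak}$ with $k\in\ZZ$ and $\gamma\in H$, where
$$h_{k}\;=\;h\cdot\ell^{a}(h)\cdot\ell^{2a}(h)\cdots\ell^{(k-1)a}(h)$$
for $k\ge 0$ (and the analogous expression for $k<0$). Indeed, the multiplication rule in $\pi_{1}(\mathcal{G}_{3})$ gives
$$(\gamma_{1}h_{k_{1}}z^{ak_{1}})(\gamma_{2}h_{k_{2}}z^{ak_{2}})\;=\;\gamma_{1}\,\ell^{ak_{1}}(\gamma_{2})\,h_{k_{1}+k_{2}}\,z^{a(k_{1}+k_{2})},$$
which keeps one inside this set thanks to $\ell^{ak_{1}}(\gamma_{2})\in H$ and the cocycle identity $h_{k_{1}}\ell^{ak_{1}}(h_{k_{2}})=h_{k_{1}+k_{2}}$; inverses are handled symmetrically. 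The three invariants then read off at once: the elements of $\Delta\cap\Gamma$ are exactly those with $k=0$, giving $H(\Delta)=H$; the minimal positive $z$-exponent is $a$, attained at $hz^{a}$, so $a(\Delta)=a$; and taking $Z_{\Delta}=hz^{a}$ yields $\nu(\Delta)=hH=\nu$. The index is then $a\cdot[\Gamma:H]=n$. The only genuinely delicate point is ruling out extra elements of $\Gamma$ sneaking into $\Delta$, which is precisely what the closed form $\gamma h_{k}z^{ak}$ prevents; once $\ell^{a}(H)=H$ is established the rest is forced.
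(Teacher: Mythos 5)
Your proposal is correct and follows essentially the same route as the paper: the paper's $\Delta$ is written out explicitly as the union of cosets $Z^{k}H$ with $Z=hz^{a}$ (split into the cases $3\nmid a$ and $3\mid a$), which is exactly your $\langle H,\,hz^{a}\rangle$, with the verification left as ``one can check.'' Your observation that conjugation by $z$ is $\ell$ and that $\ell^{a}(H)=H$ (by normality when $3\nmid a$, by $\ell^{3}=\mathrm{id}$ when $3\mid a$), together with the cocycle form $\gamma h_{k}z^{ak}$, simply supplies those omitted details in a uniform way.
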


\begin{proof}
Take an $n$-essential 3-plet $(a,H,\nu)$. In case $3\nmid a$
consider the following construction
$$
\aligned & \Delta = \{hz^{(3l+1)a(\Delta)}H|l\in
\ZZ\}\bigcup\{hz^{a(\Delta)}hz^{-a(\Delta)}z^{(3l+2)a(\Delta)}H|l\in
\ZZ\}\bigcup \{z^{3la(\Delta)}H|l\in \ZZ\}.
\endaligned
$$
One can check that $\Delta$ is a subgroup of index $n$ in
$\pi_1(\mathcal{G}_3)$ and
$(a(\Delta),H(\Delta),\nu(\Delta))=(a,H,\nu)$.

Similarly, in case $3\mid a$ the set
$$
\Delta = \{h^lz^{la(\Delta)}H|l\in \ZZ\}
$$
is the required subgroup.
\end{proof}

\begin{proof}[\bf Proof of \Cref{propG3-4}]
 Consider the family of subgroups
$\Delta$ of index $n$ in $\pi_1(\mathcal{G}_3)$, and the family of
$n$-essential 3-plets. The definition of notions $a(\Delta)$,
$H(\Delta)$ and $\nu(\Delta)$ together with \Cref{lemG3-3.3} provide
the correspondence from subgroups to $n$-essential 3-plets. Since
the above invariants are well-defined, each subgroup $\Delta$
corresponds to only one 3-plet. By virtue of \Cref{lemG3-3.6} each
3-plet corresponds to some subgroup, also different 3-plets
correspond to different 3-plets. The bijection part is proven.

If $3 \mid a(\Delta)$ \Cref{lemG3-3.6} implies that $\Delta$ is a
subgroup of $\langle x,y,z^3\rangle$. Thus $\Delta$ is a subgroup of
finite index in $\ZZ^3$, hence $\Delta$ is isomorphic to $\ZZ^3$
itself.

Consider case $a(\Delta)\equiv 1 \mod 3$. Since $H(\Delta)$ is a
subgroup of finite index in $\langle x,y\rangle \cong \ZZ^2$, we
have $H(\Delta)\cong \ZZ^2$. By Lemmas \ref{lemG3-3.1} and
\ref{lemG3-3.3} there is a pair of elements, that generates
$H(\Delta)$, that have the form  form $(x^py^q, x^{-q}y^{p-q})$. Let
$h$ be an arbitrary element in the coset $\nu(\Delta)$. Put
$X=x^py^q$, $Y=x^{-q}y^{p-q}$ and $Z=hz^{a(\Delta)}$. Direct
verification shows that the relations $XYX^{-1}Y^{-1}=1$, $ZXZ^{-1}=
Y$, and $ZYZ^{-1}=(XY)^{-1}$ holds. Further we call this relations
{\em the proper relations of the subgroup $\Delta$}. Thus the map $x
\to X, \,\, y \to Y,\,\, z \to Z$ can be extended to an epimorphism
$\pi_1(\mathcal{G}_3) \to \Delta $. To prove that this epimorphism
is really an isomorphism we need to show that each relation in
$\Delta$ is a corollary of proper relations. We call a relation,
that is not a corollary of proper relations an {\em improper
relation}.

Assume the contrary, i.e. there are some improper relations in
$\Delta$. Since in $\Delta$ the proper relations holds, each element
can be represented in the canonical form, given by \Cref{propG3-1}
in terms of $X,Y,Z$, by using just the proper relations. That is
each element $g$ can be represented as
$$
g=X^{a}Y^{b}Z^{c}.
$$
If there is an improper relation then there is an equality
\begin{equation}\label{absurdum1-G3}
X^{a}Y^{b}Z^{c}=X^{a'}Y^{b'}Z^{c'},
\end{equation}
where at least one of the inequalities $a\neq a'$, $b\neq b'$,
$c\neq c'$ holds. Applying $\phi_{\mathcal{G}3}$ to both parts we
get $ca(\Delta)=c'a(\Delta)$, thus $c=c'$. Then
$U^{a}V^{b}=U^{a'}V^{b'}$, that means
\begin{equation}\label{absurdum2-G3}
\left\{
\begin{aligned}
ap-bq= a'p-b'q\\
aq+b(p-q)= a'q+b'(p-q)\\
\end{aligned} \right.
\end{equation}

Since $\det\begin{pmatrix} p & -q \\
q & p-q\end{pmatrix}=p^2-pq+q^2=\frac{n}{a(\Delta)}\neq0$, the
system \ref{absurdum2-G3} implies $a=a'$ and $b=b'$, this
contradiction proves that $\pi_1(\mathcal{G}_3) \cong \Delta$.

The proof in case $a(\Delta)\equiv 2 \mod 3$ is similar with the
only difference we take $Y=x^py^q$ and $X=x^{-q}y^{p-q}$.
\end{proof}

\subsection{The proof of \Cref{th-1-tricosm}}

Proceed to the proof of \Cref{th-1-tricosm}. \Cref{propG3-4} claims
that each subgroup $\Delta$ of finite index $n$ is isomorphic to
 $\pi_1(\mathcal{G}_{3})$ or $\ZZ^3$, depending upon whether
$a(\Delta)$ is a multiple of 3. Consider these two cases separately.

{\bf Case (i).} Let $\Delta$ be a subgroup of
$\pi_1(\mathcal{G}_{3})$ isomorphic to $\pi_1(\mathcal{G}_{3})$. To
find the number of such subgroups, by \Cref{propG3-4} we need to
calculate the cardinality of the set of $n$-essential 3-plets with
$3 \nmid a$.

For each $3 \nmid a$ there are $\theta(\frac{n}{a})$ subgroups $H$
in $\Gamma$ such that $\big|\Gamma:H\big|=\frac{n}{a}$ and $H\lhd
\pi_1(\mathcal{G}_3) $. Also there are $\frac{n}{a}$ different
choices of a coset $\nu$. Thus, for each $3 \nmid a$ the number of
$n$-essential 3-plets is $\frac{n}{a}\theta(\frac{n}{a})$. So, the
total number of subgroups is given by
$$
s_{\pi_1(\mathcal{G}_{3}), \pi_1(\mathcal{G}_{3})}(n) =  \sum_{a
\mid n,\, 3\nmid a}  \frac{n}{a}\theta(\frac{n}{a})=\sum_{a \mid n}
\frac{n}{a}\theta(\frac{n}{a})-\sum_{3a \mid n}
\frac{n}{3a}\theta(\frac{n}{3a})=\sum_{k \mid n}k\theta(k)-\sum_{k
\mid \frac{n}{3}}k\theta(k).
$$
%Equivalently,
%$$
%s_{\pi_1(\mathcal{G}_{2}), \pi_1(\mathcal{G}_{2})}(n) =  \sum_{a
%\mid n} \frac{n}{a}\sigma_1(\frac{n}{a}) - \sum_{2a \mid n}
%\frac{n}{2a}\sigma_1(\frac{n}{2a})=\omega(n)-\omega(\frac{n}{2}).
%$$

{\bf Case (ii).} Similarly to the previous case, we get the formula
$$
s_{\ZZ^3, \pi_1(\mathcal{G}_{3})}(n) =  \sum_{3a \mid n}
\frac{n}{3a}\sigma_1(\frac{n}{3a})=\omega(\frac{n}{3}).
$$

\subsection{The proof of \Cref{th-2-tricosm}}
\subsubsection{Overall scheme of the proof}\label{scheme}
The proof of both cases follows the general scheme that we describe
first. Recall that a subgroup $G$ of finite index in
$\pi_{1}(\mathcal{G}_{3})$ has one of the following isomorphism
types: $\ZZ^3$ or $\pi_{1}(\mathcal{G}_{3})$. We use the standard
notation $[g_1,g_2]=g_1g_2g_1^{-1}g_2^{-1}, g_1,g_2 \in
\pi_{1}(\mathcal{G}_{3})$. Also, given subgroups $G_1,G_2 \leqslant
\pi_{1}(\mathcal{G}_{3})$ by $[G_1,G_2]$ we denote the subgroup,
generated by the elements $[g_1,g_2],\, g_1\in G_1, g_2\in G_2$. Fix
an isomorphism type of a subgroup. Further $\Delta$ will always
denote a subgroup of this isomorphism type of index $n$ in
$\pi_{1}(\mathcal{G}_{3})$. In each case we point a normal subgroup
of finite index $\Lambda \unlhd \pi_{1}(\mathcal{G}_{3})$ such that
two conditions are met:
\begin{enumerate}
\item[$1^\circ$] for any $\lambda \in \Lambda$ and any $\Delta$ holds  $Ad_\lambda(H(\Delta))=H(\Delta)$,
\item[$2^\circ$]$[\Lambda,Z(\Delta)]=[H(\Lambda),Z(\Delta)]$, where $Z(\Delta)$ is given by \Cref{defG3-all}.
\end{enumerate}

Call {\em an intermediate conjugacy class} $\Delta^\Lambda$ of
$\Delta$ the set of subgroups $\Delta^\lambda,\; \lambda\in\Lambda$.
Denote the number of such classes by
$c_{G,\pi_{1}(\mathcal{G}_{3})}^\Lambda$, where $G$ isomorphic to
one of $\ZZ^3$ or $\pi_{1}(\mathcal{G}_{3})$.

We propose an algorithm to  uniformly calculate
$c_{G,\pi_{1}(\mathcal{G}_{3})}^\Lambda$. Given $\Delta$, a subgroup
$\Delta'\in \Delta^\Lambda$ have the following invariants:
$a(\Delta')=a(\Delta)$, $H(\Delta')=H(\Delta)$ and
$\nu(\Delta')\in\nu(\Delta)[\Lambda,Z(\Delta)]$. Keep in mind that
$[\Lambda,Z(\Delta)] \leqslant \Gamma$, since $\Gamma$ is normal in
$\pi_{1}(\mathcal{G}_{3})$ and $\pi_{1}(\mathcal{G}_{3})/\Gamma$ is
abelian. Thus for a fixed pair $(a,H)$ there are
$|\Gamma:\langle[\Lambda,Z(\Delta)],H\rangle|$ partial conjugacy
classes $\Delta^\Lambda$, each corresponding to the pair $(a,H)$.
This let us to enumerate partial conjugacy classes.

The factor-group $\pi_{1}(\mathcal{G}_{3})/\Lambda$ acts by
conjugation on partial conjugacy classes. An orbit of partial
conjugacy classes form a conjugacy class, thus we use the Burnside's
lemma to calculate the number of conjugacy classes. To do this, we
introduce one more definition.

%{\bf ???? Кстати, тут лемму Бернсайда не стоит сформулировать ????}

\begin{definition} Given $u \in \pi_{1}(\mathcal{G}_{3})/\Lambda$, by
$B(u)$ denote the number of partial conjugacy classes, preserved by
the conjugation with $u$: $B(u)=|\{\Delta^\Lambda|
\big(\Delta^\Lambda\big)^u=\Delta^\Lambda\}|$. In particular, $B(1)$
is the number of partial conjugacy classes.
\end{definition}

Now we are done with the general scheme and proceed to its
realization in specific cases.

\subsubsection{Case (i)}
Put $\Lambda=\pi_{1}(\mathcal{G}_{3})$. \Cref{propG3-4} claims
$H(\Delta)\vartriangleleft\pi_{1}(\mathcal{G}_{3})$ in case
$\Delta\cong\pi_{1}(\mathcal{G}_{3})$, thus ($1^\circ$) holds.
Recall that $Z(\Delta)=hz^{a(\Delta)}$. Direct calculation through
(\ref{multlawG3}) shows that $[\Lambda,Z(\Delta)]=\langle
xy^{-1},xy^2 \rangle=[H(\Lambda),Z(\Delta)]$.

This means, firstly, that ($2^\circ$) holds, and secondly that
$|\Gamma:\langle[\Lambda,Z(\Delta)],H\rangle|$ equals 1 if $3\nmid
n$ and equals 3 if $3\mid n$. For a fixed $a(\Delta)$ the number of
subgroups $H(\Delta)$ is $\theta(\frac{n}{a(\Delta)})$. Keep in mind
that $\theta(\frac{k}{3})=\theta(k)$ if $\frac{k}{3}$ is integer,
and $\theta(\frac{k}{3})=0$ otherwise. So the function $k \mapsto
\left\{\begin{aligned}
\theta(k)  \;\text{if} \; 3 \nmid k\\
3\theta(k)  \;\text{if} \; 3 \mid k\\
\end{aligned}
\right.$ is given by $\theta(k)+2\theta{\frac{k}{3}}$. Applying this
and summing achieved number of pairs $(H,\nu)$ over all values of
$a$ one gets:
$$
\begin{aligned} &
c_{\pi_{1}(\mathcal{G}_{3}),\pi_{1}(\mathcal{G}_{3})}^\Lambda(n) =
\sum_{a\mid n,\,3\nmid a}
\theta(\frac{n}{a})+2\theta(\frac{n}{3a})=\sum_{a\mid
n}\theta(\frac{n}{a})+\sum_{a\mid
\frac{n}{3}}\theta(\frac{n}{3a})-2\sum_{a\mid
\frac{n}{9}}\theta(\frac{n}{9a})=\\& \sum_{k\mid
n}\theta(k)+\sum_{k\mid \frac{n}{3}}\theta(k)-2\sum_{k\mid
\frac{n}{9}}\theta(k).
\end{aligned}
$$
Since $\Lambda=\pi_{1}(\mathcal{G}_{3})$, the Burnside's lemma is
not needed to conclude
$c_{\pi_{1}(\mathcal{G}_{3}),\pi_{1}(\mathcal{G}_{3})}(n)
=c_{\pi_{1}(\mathcal{G}_{3}),\pi_{1}(\mathcal{G}_{3})}^\Lambda(n)$.

\subsubsection{Case (ii)}
Put $\Lambda=\langle x,y,z^3\rangle$. Since $\Lambda \cong\ZZ^3$ and
$\Delta\leqslant\Lambda$ for any $\Delta$, conditions ($1^\circ$)
and ($2^\circ$) hold. Also each partial conjugacy class consists of
one subgroup, i.e.
$c_{\ZZ^3,\pi_{1}(\mathcal{G}_{3})}^\Lambda(n)=s_{\ZZ^3,\pi_{1}(\mathcal{G}_{3})}(n)=\omega(\frac{n}{3})$.

The factor $\pi_{1}(\mathcal{G}_{3})/\Lambda$ consists of three
elements, represented by $1$, $z$, and $z^2$ respectively.

The condition $\Delta^{z}=\Delta$ is equivalent to
$(H(\Delta))^{z}=H(\Delta)$ and $(\nu(\Delta))^{z}=\nu(\Delta)$ met
simultaneously. \Cref{number of thirds} provides the number of such
pairs $(H,\nu)$ for a given value of $a$. Summing over all the
possible values (recall that $3 \mid a$) one gets
$B(z)=B(z^2)=\sum_{k\mid\frac{n}{3}}\theta(k)+2\sum_{k\mid\frac{n}{9}}\theta(k)$.
%{\bf ???? Надо выработать понятный стиль, когда мы заменяем $a$ на
%$k$, а когда нет. ????}

By making use of Burnside's lemma obtain
$$
c_{\ZZ^3,\pi_{1}(\mathcal{G}_{3})}(n)=\frac{1}{3}\Big(\omega(\frac{n}{3})
+2\sum_{k\mid\frac{n}{3}}\theta(k)+4\sum_{k\mid\frac{n}{9}}\theta(k)\Big).
$$

\section{On the coverings of $\mathcal{G}_{5}$}\label{partG5}
\subsection{The structure of the group $\pi_{1}(\mathcal{G}_{5})$ }
Recall that the fundamental group of  $\mathcal{G}_{5}$ is given by:
$\pi_{1}(\mathcal{G}_{5})=\langle \tilde{x}, \tilde{y}, \tilde{z}:
\tilde{x}\tilde{y}\tilde{x}^{-1}\tilde{y}^{-1}=1,
   \tilde{z}\tilde{x}\tilde{z}^{-1}= \tilde{x}\tilde{y}, \tilde{z}\tilde{y}\tilde{z}^{-1}=\tilde{x}^{-1}
 \rangle$.
The following proposition provides the canonical form of an element
in $\pi_{1}(\mathcal{G}_{5})$.

\begin{proposition}\label{propG5-1}
\begin{itemize}
\item[(i)] Each element of $\pi_{1}(\mathcal{G}_{5})$ can be represented in the canonical form $\tilde{x}^a\tilde{y}^b\tilde{z}^c$
for some integer $a,b,c$.
\item[(ii)] The product of two canonical forms is given by the
formula
\begin{equation}\label{multlawG5}
\tilde{x}^a\tilde{y}^b\tilde{z}^c \cdot
\tilde{x}^{d}\tilde{y}^{e}\tilde{z}^{f}= \left\{
\begin{aligned}
\tilde{x}^{a+d}\tilde{y}^{b+e}\tilde{z}^{c+f} \quad \text{if} \quad c\equiv 0\mod6 \\
\tilde{x}^{a+d-e}\tilde{y}^{b+d}\tilde{z}^{c+f} \quad \text{if} \quad c\equiv 1\mod6 \\
\tilde{x}^{a-e}\tilde{y}^{b+d-e}\tilde{z}^{c+f} \quad \text{if} \quad c\equiv 2\mod6 \\
\tilde{x}^{a-d}\tilde{y}^{b-e}\tilde{z}^{c+f} \quad \text{if} \quad c\equiv 3\mod6 \\
\tilde{x}^{a-d+e}\tilde{y}^{b-d}\tilde{z}^{c+f} \quad \text{if} \quad c\equiv 4\mod6 \\
\tilde{x}^{a+e}\tilde{y}^{b-d+e}\tilde{z}^{c+f} \quad \text{if} \quad c\equiv 5\mod6 \\
\end{aligned} \right.
\end{equation}
\item[(iii)] The canonical epimorphism $\phi_{\mathcal{G}5}: \pi_{1}(\mathcal{G}_{5}) \to
\pi_{1}(\mathcal{G}_{5})/\langle \tilde{x},\tilde{y}\rangle \cong
\ZZ$, given by the formula $\tilde{x}^a\tilde{y}^b\tilde{z}^c \to c$
is well-defined.
\item[(iv)] The representation in the canonical form $g=\tilde{x}^a\tilde{y}^b\tilde{z}^c$ for
each element $g \in \pi_{1}(\mathcal{G}_{5})$ is unique.
\end{itemize}
\end{proposition}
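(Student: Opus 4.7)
The plan is to follow the same template as the proof of \Cref{propG3-1} (which, as the authors note, mirrors Proposition~2 of \cite{We2}). The only genuine change is the period of the monodromy: the conjugation action of $\tilde z$ on the abelian subgroup $\Gamma_{5}=\langle\tilde x,\tilde y\rangle$ is given, in the basis $\tilde x,\tilde y$, by the matrix
\[
\phi \;=\; \begin{pmatrix} 1 & -1\\ 1 & 0\end{pmatrix},
\]
read off from $\phi(\tilde x)=\tilde x\tilde y$ and $\phi(\tilde y)=\tilde x^{-1}$. Its characteristic polynomial is $\lambda^{2}-\lambda+1$, so $\phi$ has order $6$ in $GL_{2}(\ZZ)$; this explains why the multiplication table in (ii) splits into six cases indexed by $c\bmod 6$.

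For (i), I would use the four rewriting rules $\tilde z\tilde x=\tilde x\tilde y\tilde z$, $\tilde z\tilde y=\tilde x^{-1}\tilde z$, $\tilde z^{-1}\tilde x=\tilde y^{-1}\tilde z^{-1}$, $\tilde z^{-1}\tilde y=\tilde x\tilde y\tilde z^{-1}$ (the first two are the defining relations; the latter two come from inverting $\phi$), together with $\tilde x\tilde y=\tilde y\tilde x$, to push every occurrence of $\tilde z^{\pm 1}$ to the right of every $\tilde x^{\pm 1}$ and $\tilde y^{\pm 1}$. A simple induction on word length yields the canonical form $\tilde x^{a}\tilde y^{b}\tilde z^{c}$. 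For (ii), I would compute $\phi^{c}$ for $c=0,1,\dots,5$ (using $\phi^{3}=-I$, $\phi^{6}=I$) and read off the six displayed formulas by substituting the columns of $\phi^{c}$ for $(d,e)^{\!T}$ in the identity $\tilde z^{c}\tilde x^{d}\tilde y^{e}\tilde z^{-c}=\tilde x^{d'}\tilde y^{e'}$ with $(d',e')^{\!T}=\phi^{c}(d,e)^{\!T}$. For (iii), I would verify that each of the three defining relators is sent to $0\in\ZZ$ by the map $\tilde x,\tilde y\mapsto 0,\;\tilde z\mapsto 1$, so the map is well-defined; its surjectivity is immediate.

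The main obstacle is part (iv), uniqueness of the canonical form, because no manipulation internal to the presentation can rule out unexpected coincidences between distinct canonical triples. My approach is the standard one: build the external semidirect product $G=\ZZ^{2}\rtimes_{\phi}\ZZ$ with $\phi$ as above, in which elements are triples $(a,b,c)$ with the multiplication law forced by $\phi$, hence mutually distinct. The assignment $\tilde x\mapsto((1,0),0)$, $\tilde y\mapsto((0,1),0)$, $\tilde z\mapsto((0,0),1)$ respects the three defining relations of $\pi_{1}(\mathcal{G}_{5})$ (direct check, since $\phi(1,0)=(1,1)$ and $\phi(0,1)=(-1,0)$), so it extends to a surjection $\pi_{1}(\mathcal{G}_{5})\twoheadrightarrow G$ under which the canonical word $\tilde x^{a}\tilde y^{b}\tilde z^{c}$ maps to the triple $(a,b,c)$. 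Distinct canonical triples therefore have distinct images, and so are distinct in $\pi_{1}(\mathcal{G}_{5})$; combined with (i) this gives both existence and uniqueness. I would not re-derive the multiplication in $G$ from scratch, since it is exactly what (ii) predicts once $\phi$ is known.
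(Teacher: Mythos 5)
Your proof is correct, and it is essentially the standard argument the paper alludes to when it says the proposition ``routinely follows from the definition of the group'': exhibiting $\pi_{1}(\mathcal{G}_{5})$ as the split extension $\ZZ^{2}\rtimes_{\phi}\ZZ$ with monodromy $\phi$ of order $6$, deriving the six cases of (\ref{multlawG5}) from $\phi^{c}$, and getting uniqueness from the faithful triple coordinates in the semidirect product. Your verification of $\phi^{3}=-I$ and the resulting six formulas matches the displayed multiplication law, so nothing further is needed.
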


%Again, the proof is similar to the proof of Proposition 2 in
%\cite{We2}.
Routinely follows from the definition of the group.

\smallskip
{\bf Notation.} Let $\Gamma=\langle \tilde{x},\tilde{y}\rangle$ be
the subgroup of $\pi_{1}(\mathcal{G}_{5})$ generated by
$\tilde{x},\tilde{y}$.
%{\bfNotation.} By $\Gamma$ denote the subgroup of
%$\pi_{1}(\mathcal{G}_{5})$ generated by $\tilde{x},\tilde{y}$.
\smallskip

Our goal is to introduce some easy invariants, similar to those in
\Cref{number of sublattices}. That will let us to enumerate the
subgroups.
%This is done in the following two definitions and
%\Cref{propG5-4}.

\begin{definition}\label{defG5-all}
Suppose all elements of $\pi_{1}(\mathcal{G}_{5})$ are represented
in the canonical form. Let $\Delta$ be a subgroup of finite index
$n$ in $\pi_{1}(\mathcal{G}_{5})$. Put $H(\Delta)=\Delta\bigcap
\Gamma$. By $a(\Delta)$ denote the minimal positive exponent of
$\tilde{z}$ among all the elements
$\tilde{x}^a\tilde{y}^b\tilde{z}^c\in\Delta$. Choose an element
$Z(\Delta)$ with such exponent of $\tilde{z}$, represented in the
form $Z(\Delta)=h\tilde{z}^{a(\Delta)}$, where $h \in \Gamma$. By
$\nu(\Delta)=hH(\Delta)$ denote the coset in $\Gamma/H(\Delta)$
containing $h$.
%For the coset decomposition $\Gamma/H(\Delta)$
%we will use the additive notation.
\end{definition}

\begin{definition}\label{defG5-3-plet}
A 3-plet $(a,H,\nu)$ is called {\em $n$-essential} if the following
conditions holds:
\begin{itemize}
\item[(i)] $a$ is a positive divisor of $n$,
\item[(ii)] $H$ is a subgroup of index $n/a$ in $\Gamma$ with $H\lhd \pi_1(\mathcal{G}_5)$ if $3\nmid
a$,
%and $[\ZZ^2 : \phi(\Delta)\big]=n/l(\Delta)$,
\item[(iii)] $\nu$ is an element of $\Gamma/H$.
\end{itemize}
\end{definition}

\begin{proposition}\label{propG5-4}
There is a bijection between the set of $n$-essential 3-plets
$(a,H,\nu)$ and the set of subgroups $\Delta$ of index $n$ in
$\pi_1(\mathcal{G}_5)$, given by the correspondence $\Delta
\leftrightarrow (a,H,\nu)=(a(\Delta),H(\Delta),\nu(\Delta))$.
Moreover, $\Delta \cong \ZZ^3$ if $(a,6)=6$, $\Delta \cong
\pi_1(\mathcal{G}_2)$ if $(a,6)=3$, $\Delta \cong
\pi_1(\mathcal{G}_3)$ if $(a,6)=2$ and $\Delta\cong
\pi_{1}(\mathcal{G}_{5})$ if $(a,6)=1$.
\end{proposition}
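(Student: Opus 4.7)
My plan is to mimic the proof of \Cref{propG3-4} in the richer setting where conjugation by $\tilde{z}$ has order $6$ rather than $3$ on $\Gamma \cong \ZZ^2$. First I would verify that the invariants $a(\Delta)$, $H(\Delta)$, $\nu(\Delta)$ of \Cref{defG5-all} are well-defined and satisfy $a(\Delta)\cdot [\Gamma:H(\Delta)] = n$ (the latter from applying $\phi_{\mathcal{G}5}$). Next I would prove the direct analog of \Cref{lemG3-3.3}: if $3\nmid a(\Delta)$ then $H(\Delta)\lhd \pi_1(\mathcal{G}_5)$. A direct inspection of (\ref{multlawG5}) shows that conjugation by $\tilde{z}^3$ acts on $\Gamma$ as $(d,e)\mapsto(-d,-e)$ and hence preserves every subgroup of $\Gamma$; combining this with the fact that $Z(\Delta)=h\tilde{z}^{a(\Delta)}\in\Delta$ normalizes $H(\Delta)=\Delta\cap\Gamma$, a Bezout argument using $\gcd(a(\Delta),3)=1$ upgrades $\tilde{z}^{a(\Delta)}$-invariance of $H(\Delta)$ to $\tilde{z}$-invariance, which is full normality in $\pi_1(\mathcal{G}_5)$ since $\Gamma$ is abelian.

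The analog of \Cref{lemG3-3.6} is then established by exhibiting, for each $n$-essential 3-plet $(a,H,\nu)$, an explicit subgroup $\Delta$ realizing it: take $\Delta$ to be the union of cosets $Z^{k}H$ for $k\in\ZZ$, where $Z=h\tilde{z}^{a}$ with any $h$ representing $\nu$. The precise coset formula splits by $a\bmod 6$, reflecting the iterated action of $\tilde{z}^a$ on $\Gamma$, but each sub-case is a routine verification via (\ref{multlawG5}) (it mirrors the three-term formula used in \Cref{lemG3-3.6}). Once this existence statement is in hand, injectivity of the correspondence $\Delta\mapsto(a(\Delta),H(\Delta),\nu(\Delta))$ is automatic and the bijection part is complete.

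The main obstacle is identifying the isomorphism type of $\Delta$ according to $(a,6)\in\{1,2,3,6\}$. The easy case is $(a,6)=6$: then $\Delta\leqslant\langle\tilde{x},\tilde{y},\tilde{z}^{6}\rangle$, and since $\tilde{z}^{6}$ commutes with $\tilde{x},\tilde{y}$ by (\ref{multlawG5}), this ambient subgroup is $\cong\ZZ^{3}$, hence so is $\Delta$. For the other three values of $(a,6)$ I would adapt the final portion of the proof of \Cref{propG3-4}: choose generators $X,Y$ of $H(\Delta)$ adapted to the action of $\tilde{z}^{a(\Delta)}$ on $\Gamma$ (this action has order $6/(a,6)\in\{2,3,6\}$), set $Z=h\tilde{z}^{a(\Delta)}$, and verify by direct computation using (\ref{multlawG5}) that $X,Y,Z$ satisfy the defining relations of $\pi_1(\mathcal{G}_i)$ for $i=2,3,5$ respectively. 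This produces an epimorphism $\pi_1(\mathcal{G}_i)\to\Delta$. To promote it to an isomorphism I would repeat the ``no improper relations'' argument: any equality $X^{a}Y^{b}Z^{c}=X^{a'}Y^{b'}Z^{c'}$ reduces via $\phi_{\mathcal{G}5}$ to $c=c'$, and the remaining identity $X^{a}Y^{b}=X^{a'}Y^{b'}$ in $\Gamma$ is then a linear system whose coefficient matrix has determinant $[\Gamma:H(\Delta)]\neq 0$, forcing $(a,b)=(a',b')$. The trickiest bookkeeping is to choose the ordered pair $(X,Y)$ compatibly with the sign/orientation conventions of the target relations; in \Cref{propG3-4} this amounted to swapping $(X,Y)$ according to $a\bmod 3$, and here one must analyze four residue classes modulo $6$, but each individual verification is mechanical.
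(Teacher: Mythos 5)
Your proposal is correct and takes essentially the same route as the paper, whose entire proof of \Cref{propG5-4} is the remark that it is ``similar to the proof of \Cref{propG3-4}''. Your outline is exactly that adaptation: well-definedness of $(a(\Delta),H(\Delta),\nu(\Delta))$, normality of $H(\Delta)$ for $3\nmid a$ via the $-I$ action of $\tilde{z}^3$ plus a Bezout argument, realization of every $n$-essential 3-plet by $\Delta=\bigcup_k Z^kH$, and identification of the isomorphism type from the order $6/(a,6)$ of the $\tilde{z}^{a}$-action on $\Gamma$ together with the adapted generators and the determinant (``no improper relations'') argument of \Cref{propG3-4}.
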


%The next few lemmas are auxiliary statements needed for the proof of
%\Cref{propG5-4}.
\begin{proof} The proof of \Cref{propG5-4} is similar to the proof
of \Cref{propG3-4}.
\end{proof}

\subsection{The proof of \Cref{th-1-hexacosm}}
In case (i) the argument similar to the proof of \Cref{th-1-tricosm}
leads to the formula:
$$
s_{\pi_1(\mathcal{G}_{5}), \pi_1(\mathcal{G}_{5})}(n) =  \sum_{a
\mid n,\;(a,6)=1}\frac{n}{a}\theta(\frac{n}{a})= \sum_{k \mid
n,\;(\frac{n}{k},6)=1}k\theta(k).
$$
%=\sum_{k \mid n}k\theta(k)-\sum_{k \mid
%\frac{n}{2}}k\theta(k)-\sum_{k \mid \frac{n}{3}}k\theta(k)+\sum_{k
%\mid \frac{n}{6}}k\theta(k)
The last equality is obtained by applying the inclusion–exclusion
principle. The cases (ii), (iii) and (iv) can be treated in the
similar way.

\subsection{The proof of \Cref{th-2-hexacosm}}
The proof uses the overall scheme form \cref{scheme}, so we just
proceed to to its realization in specific cases.

\subsubsection{Case (i)}
Put $\Lambda=\pi_{1}(\mathcal{G}_{5})$. \Cref{propG5-4} claims
$H(\Delta)\vartriangleleft\pi_{1}(\mathcal{G}_{5})$ in case
$\Delta\cong\pi_{1}(\mathcal{G}_{5})$, thus ($1^\circ$) holds.
Direct calculation using (\ref{multlawG5}) in case $a(\Delta)\equiv
1 \mod 6$ gives $[\tilde{x},Z(\Delta)]=\tilde{y}^{-1}$ and
$[\tilde{y},Z(\Delta)]=\tilde{x}\tilde{y}$. In case $c(\Delta)\equiv
5 \mod 6$ we respectively get
$[\tilde{x},Z(\Delta)]=\tilde{x}\tilde{y}$ and
$[\tilde{y},Z(\Delta)]=\tilde{x}^{-1}$. So in both cases
$[\Gamma,Z(\Delta)]=\Gamma$. Firstly this means that ($2^\circ$)
holds. Secondly, conjugacy classes of subgroups $\Delta$ are in
one-to-one correspondence with pairs $(a,H)$. Summing the number of
choices of $H$ over all the possible values of $a$ get
$$
c_{\pi_{1}(\mathcal{G}_{5}),\pi_{1}(\mathcal{G}_{5})}(n) =
\sum_{a\mid n,\,(6,a)=1} \theta(\frac{n}{a})=\sum_{k\mid n}
\theta(k) - \sum_{k\mid \frac{n}{2}} \theta(k) - \sum_{k\mid
\frac{n}{3}} \theta(k) + \sum_{k\mid \frac{n}{6}} \theta(k).
$$

%\linebreak
%$\nu(\Delta')=\nu(\Delta)\langle\lambda_1Z(\Delta)\lambda_1^{-1}Z(\Delta)^{-1},\lambda_2Z(\Delta)\lambda_2^{-1}Z(\Delta)^{-1},\lambda_3Z(\Delta)\lambda_3^{-1}Z(\Delta)^{-1}\rangle$

\subsubsection{Case (ii)}
Put $\Lambda=\langle \tilde{x},\tilde{y},\tilde{z}^2\rangle$.
\Cref{propG5-4} claims
$H(\Delta)\vartriangleleft\pi_{1}(\mathcal{G}_{5})$ in case
$\Delta\cong\pi_{1}(\mathcal{G}_{5})$, thus ($1^\circ$) holds.
Recall that $Z(\Delta)=h\tilde{z}^{a(\Delta)}$. Direct calculation
through (\ref{multlawG5}) shows that $[\Lambda,Z(\Delta)]=\langle
\tilde{x}\tilde{y}^{-1},\tilde{x}\tilde{y}^2
\rangle=[H(\Lambda),Z(\Delta)]$.

This means, firstly, that ($2^\circ$) holds, and secondly that
$|\Gamma:\langle[\Lambda,Z(\Delta)],H\rangle|$ equals 1 if $3\nmid
n$ and equals 3 if $3\mid n$.

The factor $\pi_{1}(\mathcal{G}_{5})/\Lambda$ consists of two
elements, represented by $1$ and $\tilde{x}^3$ respectively. The
conjugation with these elements preserves $(a,H)$, thus in case
$3\nmid n$ the partial conjugacy classes coincide with conjugacy
classes, and there is only one conjugacy class for a fixed pair
$(a,H)$. In case $3\mid n$ for a fixed pair $(a,H)$ there are 3
partial conjugacy classes: namely $\Delta_0^\Lambda$,
$\Delta_1^\Lambda$ and $\Delta_2^\Lambda$, where
$\Delta_0\leftrightarrow (a,H,1)$, $\Delta_1\leftrightarrow
(a,H,\tilde{y})$ and $\Delta_2\leftrightarrow (a,H,\tilde{y}^2)$.
Note that the conjugation with $\tilde{x}^3$ swaps the partial
conjugacy classes $\Delta_1^\Lambda$ and $\Delta_2^\Lambda$. Thus
for a fixed pair $(a,H)$ there are two conjugacy classes:
$\Delta_0^{\pi_1(\mathcal{G}_5)}=\Delta_0^{\Lambda}$ and
$\Delta_1^{\pi_1(\mathcal{G}_5)}=\Delta_1^{\Lambda}\bigcup\Delta_2^{\Lambda}$.

Keep in mind that $\theta(\frac{n}{3})=\theta(n)$ if $\frac{n}{3}$
is integer, and $\theta(\frac{n}{3})=0$ otherwise. Applying this and
summing achieved number of conjugacy classes over all values of $a$
one gets:
$$
\begin{aligned} &
c_{\pi_{1}(\mathcal{G}_{3}),\pi_{1}(\mathcal{G}_{5})}(n) =
\sum_{a\mid \frac{n}{2},\,a\nmid \frac{n}{6},\,3\nmid a}
\theta(\frac{n}{2a}) +2\sum_{a\mid \frac{n}{6},\,3\nmid a}
\theta(\frac{n}{6a})=\sum_{a\mid \frac{n}{2},\,3\nmid a}
\theta(\frac{n}{2a}) +\sum_{a\mid \frac{n}{6},\,3\nmid a}
\theta(\frac{n}{6a}) =\\&\sum_{a \mid
\frac{n}{2}}\theta(\frac{n}{2}) -\sum_{a \mid
\frac{n}{18}}\theta(\frac{n}{18})=
\sum_{k\mid\frac{n}{2}}\theta(k)-\sum_{k\mid\frac{n}{18}}\theta(k).
\end{aligned}
$$

\subsubsection{Case (iii)}
Put $\Lambda=\langle \tilde{x},\tilde{y},\tilde{z}^3\rangle$.
$Ad_{\tilde{x}}$ and $Ad_{\tilde{y}}$ are the identity
transformation on $\Gamma$. $Ad_{\tilde{z}^3}$ is given by $g \to
g^{-1},\; g\in \Gamma$. That is $Ad_{\tilde{x}}$, $Ad_{\tilde{y}}$
and  $Ad_{\tilde{z}^3}$ preserves $H(\Delta)$, i.e. ($1^\circ$)
holds.

Further $[H(\Lambda),Z(\Delta)]=\langle \tilde{x}^2,\tilde{y}^2
\rangle$. Recall the notation $Z(\Delta)=h\tilde{z}^{a(\Delta)}$.
Then $[\tilde{z}^3,Z(\Delta)]=h^{-2}\in \langle
\tilde{x}^2,\tilde{y}^2 \rangle$, so ($2^\circ$) holds.

Applying \Cref{number of halfes-remark} and \Cref{number of halfes}
and summing over all possible values of $a$ find
$$
c_{\pi_{1}(\mathcal{G}_{2}),\pi_{1}(\mathcal{G}_{5})}^\Lambda(n)=\sum_{\substack{a
\mid n,\\3\mid a, 6\nmid a}
}\Big(\sigma_1(\frac{n}{a})+3\sigma_1(\frac{n}{2a})\Big)=\sigma_2(\frac{n}{3})+2\sigma_2(\frac{n}{6})-3\sigma_2(\frac{n}{12}).
$$

The factor $\pi_{1}(\mathcal{G}_{5})/\Lambda$ consists of three
elements, represented by $1$, $\tilde{x}^2$ and $\tilde{x}^4$
respectively. Obviously the numbers of partial conjugacy classes,
preserved by $Ad_{\tilde{x}^2}$ and $Ad_{\tilde{x}^4}$ coincide:
$B(\tilde{x}^2)=B(\tilde{x}^4)$. To find them note the following. A
partial conjugacy class $\Delta^\Lambda$ is preserved by
$Ad_{\tilde{x}^2}$ iff the hollowing conditions are met
simultaneously:
\begin{itemize}
\item[(*)]$\big(H(\Delta)\big)^{\tilde{z}^2}=H(\Delta)$
\item[(**)]$\big(\nu(\Delta)\big)^{\tilde{z}^2}$ must belong to the same
conjugacy class in $\Gamma/\langle \tilde{x}^2,\tilde{y}^2,
H\rangle$ as $\nu(\Delta)$.
\end{itemize}

By \Cref{lemG3-3.1} the condition (*) implies that $H(\Delta)$ have
a pair of generators
$(\tilde{x}^p\tilde{y}^q,\tilde{x}^{-q}\tilde{y}^{p-q})$. The matrix
$\begin{pmatrix}   p & q \\ -q & p-q \end{pmatrix}$ modulo 2 have
the rank 0 or 2, never 1. So $\Gamma/\langle
\tilde{x}^2,\tilde{y}^2,
\tilde{x}^p\tilde{y}^q,\tilde{x}^{-q}\tilde{y}^{p-q}\rangle$ is
either trivial or isomorphic to $\ZZ_2^2$. In the first case the
condition (**) holds for the sole element, in the second case the
condition (**) holds only for the coset of 0, and does not holds for
three other cosets.

So the conjugacy classes $\Delta^{\Lambda}$ with
$\Delta^{\tilde{z}^2}\in \Delta^{\Lambda}$ are in one-to-one
correspondence with the normal subgroups $H(\Delta)\lhd
\pi_{1}(\mathcal{G}_{5})$. Applying \Cref{lemG3-3.1} and summing
over all the possible values of $a$ yields
$B(\tilde{x}^2)=B(\tilde{x}^4)=\sum_{k\mid
\frac{n}{3}}\theta(k)-\sum_{k\mid \frac{n}{6}}\theta(k)$.
Substituting to Burnside's lemma obtain
$$
c_{\pi_{1}(\mathcal{G}_{2}),\pi_{1}(\mathcal{G}_{5})}(n)=\frac{1}{3}\Big(\sigma_2(\frac{n}{3})+2\sigma_2(\frac{n}{6})-3\sigma_2(\frac{n}{12})+2\sum_{k\mid
\frac{n}{3}}\theta(k)-2\sum_{k\mid \frac{n}{6}}\theta(k)\Big).
$$

\subsubsection{Case (iv)}
Put $\Lambda=\langle \tilde{x},\tilde{y},\tilde{z}^6\rangle$. Since
$\Lambda \cong\ZZ^3$ and $\Delta\leqslant\Lambda$ for any $\Delta$,
conditions ($1^\circ$) and ($2^\circ$) hold. Also each partial
conjugacy class consists of one subgroup, i.e.
$c_{\ZZ^3,\pi_{1}(\mathcal{G}_{5})}(n)^\lambda=s_{\ZZ^3,\pi_{1}(\mathcal{G}_{5})}(n)=\omega(\frac{n}{6})$.

The factor $\pi_{1}(\mathcal{G}_{5})/\Lambda$ consists of six
elements, represented by $1$, $\tilde{x}$, $\tilde{x}^2$,
$\tilde{x}^3$, $\tilde{x}^4$ and $\tilde{x}^5$ respectively.

The condition $\Delta^{\tilde{z}^3}=\Delta$ is equivalent to
$2\nu(\Delta)=0$. \Cref{number of halfes} provides the number of
such pairs $(H,\nu)$ for a given value of $a$, summing over all
possible values (recall that $6 \mid a$) one gets
$B(\tilde{z}^3)=\sigma_2(\frac{n}{6})+3\sigma_2(\frac{n}{12})$.

The condition $\Delta^{\tilde{z}^2}=\Delta$ is equivalent to
$(H(\Delta))^{\tilde{z}^2}=H(\Delta)$ and
$(\nu(\Delta))^{\tilde{z}^2}=\nu(\Delta)$ met simultaneously.
\Cref{number of halfes} provides the number of such pairs $(H,\nu)$
for a given value of $a$. Summing over all the possible values
(recall that $6 \mid a$) one gets
$B(\tilde{z}^2)=B(\tilde{z}^4)=\sum_{k\mid\frac{n}{6}}\theta(k)+2\sum_{k\mid\frac{n}{18}}\theta(k)$.

Finally, $\Delta^{\tilde{z}}=\Delta$ implies
$(H(\Delta))^{\tilde{z}}=H(\Delta)$ and
$(\nu(\Delta))^{\tilde{z}^3}=(\nu(\Delta))^{\tilde{z}^2}=\nu(\Delta)$.
The latter two equalities imply $\nu(\Delta)=0$, so the unique
subgroup $\Delta$ correspond to a $H(\Delta)$. Summing over all the
possible values of $a$ yields
$B(\tilde{z})=B(\tilde{z}^5)=\sum_{k\mid\frac{n}{6}}\theta(k)$.

Substituting to Burnside's lemma obtain
$$
c_{\ZZ^3,\pi_{1}(\mathcal{G}_{5})}(n)=\frac{1}{6}\Big(\omega(\frac{n}{6})+\sigma_2(\frac{n}{6})+3\sigma_2(\frac{n}{12})
+4\sum_{k\mid\frac{n}{6}}\theta(k)+4\sum_{k\mid\frac{n}{18}}\theta(k)\Big).
$$

\section{Appendix}
Given a sequence $\{f(n)\}_{n=1}^\infty$, a formal power series
$$
\widehat{f}(s)=\sum_{n=1}^\infty\frac{f(n)}{n^s}
$$
is called Dirichlet generating function for $\{f(n)\}_{n=1}^\infty$,
see for example, \cite{Wolfram}. For the way to reconstruct the
sequence $f(n)$ by $\widehat{f}(s)$ see Perron's formula (for
example \cite{Wolfram2}).

Here we present the Dirichlet generating functions for the
calculated sequences $s_{H,G}(n)$ and $c_{H,G}(n)$. Since theorems
1--4 provides he explicit formulas, the remaining can is done by
direct calculations which we omit here.

{\bf Notations.} By $\zeta(s)$ we denote the Riemann zeta function.
Define sequence $\{\chi(n)\}_{n=1}^\infty$ by
$\chi(n)=\frac{1}{\sqrt{3}}\Big((-\frac12+\frac{\sqrt{3}}{2}i)^n-(-\frac12-\frac{\sqrt{3}}{2}i)^n\Big)$
or equivalently $\chi(n)=\left\{\begin{aligned}
1  \;\text{if}\; n\equiv 1 \mod 3\\
-1  \;\text{if}\; n\equiv 2 \mod 3\\
\end{aligned}
\right. $. For the sake of brevity denote
$\vartheta(s)=\widehat{\chi}(s)$. Note that $\vartheta(s)$ is the
Dirichlet L-series for the multiplicative character $\chi(n)$.

\def\formi{$3^{-s}\zeta(s)\zeta(s-1)\zeta(s-2)$}
\def\formii{$6^{-s}\zeta(s)\zeta(s-1)\zeta(s-2)$}
\def\formiii{$3^{-s-1}\zeta(s)\Big(\zeta(s-1)\zeta(s-2)+
 {2(1+2\cdot3^{-s})}\zeta(s)\vartheta(s)\Big)$}
\def\formiv{$6^{-s-1}\zeta(s)\Big(\zeta(s-1)\zeta(s-2)+{(1+3\cdot2^{-s})\zeta(s)\zeta(s-1)}+4(1+3^{-s})\zeta(s)\vartheta(s)\Big)$}
\def\formv{$3^{-s}\big(1-2^{-s}\big)\zeta(s)\zeta(s-1)\zeta(s-2)$}
\def\formvi{$3^{-s}\big(1-2^{-s}\big)\zeta(s)^2\Big((1+3\cdot2^{-s})\zeta(s-1)+2\vartheta(s)\Big)$}
\def\formvii{$\big(1-3^{-s}\big)\zeta(s-1)^2\vartheta(s-1)$}
\def\formviii{$2^{-s}\big(1-3^{-s}\big)\zeta(s-1)^2\vartheta(s-1)$}
\def\formix{$\big(1-3^{-s}\big)\big(1+2\cdot3^{-s}\big)\zeta(s)^2\vartheta(s)$}
\def\formx{$2^{-s}\big(1-3^{-s}\big)\big(1+3^{-s}\big)\zeta(s)^2\vartheta(s)$}
\def\formxi{$\big(1-2^{-s}\big)\big(1-3^{-s}\big)\zeta(s-1)^2\vartheta(s-1)$}
\def\formxii{$\big(1-2^{-s}\big)\big(1-3^{-s}\big)\zeta(s)^2\vartheta(s)$}
$$
\begin{array}{|c|c|p{5.5cm}|p{8.5cm}|}\hline
 \multicolumn{2}{|c|}{$\backslashbox{H}{G}$} & $\mathcal{G}_{3}$ & $\mathcal{G}_{5}$ \\ \hline
 \multirow{2}*{$\ZZ^3$} & \widehat{s}_{H,G} & \formi & \formii \\ \cline{2-4}
 & \widehat{c}_{H,G} & \formiii & \formiv  \\ \hline
 \multirow{2}*{$\mathcal{G}_{2}$} & \widehat{s}_{H,G} &  & \formv \\ \cline{2-4}
 & \widehat{c}_{H,G} &  & \formvi  \\ \hline
  \multirow{2}*{$\mathcal{G}_{3}$} & \widehat{s}_{H,G} & \formvii & \formviii \\ \cline{2-4}
 & \widehat{c}_{H,G} & \formix & \formx  \\ \hline
 \multirow{2}*{$\mathcal{G}_{5}$} & \widehat{s}_{H,G} &  & \formxi \\ \cline{2-4}
 & \widehat{c}_{H,G} &  & \formxii  \\ \hline
\end{array}
$$

\end{document}